\documentclass[12pt]{amsart}
\usepackage{amsmath}
\usepackage{amsthm}
\usepackage{amssymb}
\usepackage{amscd}
\usepackage{amsfonts}
\usepackage{amsbsy}
\usepackage{epsfig,afterpage}
\usepackage[dvips]{psfrag}
\usepackage{subfigure}
\usepackage{srcltx}

\usepackage{verbatim}
\usepackage{graphicx}

\usepackage{multirow}
\usepackage{graphics}
\usepackage{epstopdf}
\usepackage{overpic}
\usepackage{multicol}

\usepackage[colorlinks=true, linkcolor=blue, citecolor=red, urlcolor=blue]{hyperref}

\newtheorem {theorem} {Theorem}
\newtheorem {proposition} [theorem]{Proposition}

\newtheorem {remark} [theorem]{Remark}

\newtheorem {definition} [theorem]{Definition}

\newtheorem{mtheorem}{Theorem}

\usepackage{float}
\usepackage{tikz, wrapfig}
\tikzset{node distance=3cm, auto}
\allowdisplaybreaks

\newtheorem{assumption}{Assumption}

\begin{document}

\title[Cyclicity of sliding cycles in regularized PWL systems]
{Cyclicity of sliding cycles in regularizations of piecewise linear two-folds}

\author[R. Huzak, K.U. Kristiansen, O.H. Perez and A.L.M.T da Silva]
{Renato Huzak$^{1}$, Kristian Uldall Kristiansen$^{2}$, \\
Otavio Henrique Perez$^{3}$, Andr\'e Luis Martins Tomaz da Silva$^{1,4}$}

\address{$^{1}$Hasselt University, Campus Diepenbeek, Agoralaan Gebouw D, 3590 Diepenbeek, Belgium}

\address{$^{2}$Department of Applied Mathematics and Computer Science, Technical University of Denmark, 2800
Kgs. Lyngby, Denmark}

\address{$^{3}$University of S\~{a}o Paulo (USP), Institute of Mathematics and Computer Science. Avenida Trabalhador S\~{a}o Carlense, 400, Zip Code 13566-590, S\~{a}o Carlos, S\~{a}o Paulo, Brazil}

\address{$^{4}$Universidade de Brasília, Departamento de Matemática, Campus Universitário Darcy Ribeiro, Asa Norte 70910-900, Brasília-DF, Brazil}

\email{renato.huzak@uhasselt.be}
\email{krkri@dtu.dk}
\email{otavio.perez@icmc.usp.br}
\email{andreluis.martinstomazdasilva@uhasselt.be, andreluis@mat.unb.br}

\thanks{ .}

\subjclass[2020]{34C07, 34E15}

\keywords{Piecewise linear vector fields, Poincar\'e half-maps, regularizations, sliding cycles, slow divergence integral}
\date{}
\dedicatory{}


\begin{abstract}
We study limit cycles produced by Sotomayor–Teixeira regularizations of piecewise linear vector fields with generic two-fold singularities. We focus on the visible-visible and visible-invisible cases where sliding cycles occur, treating cycles from both sides of the switching manifold in a unified way. In further details, by relating the cyclicity of sliding cycles and zeros of the slow divergence integral, we prove that the cyclicity of compact families of sliding cycles is bounded by two when such integral does not vanish identically. In contrast to previous works, we do not perform a case-by-base study, but instead relate zeros of slow divergence integrals to crossing limit cycles of a suitably defined auxiliary piecewise linear system. For visible folds, we provide necessary and sufficient conditions that assure the existence of a unique simple zero of the slow divergence integral, which implies the existence of two limit cycles. We also show that, when a hyperbolic singularity of the sliding vector field lies at the boundary of the sliding segment, then the cyclicity is bounded by one.
\end{abstract}

\maketitle

\section{Introduction and statement of the problem}







The second part of Hilbert’s 16th problem \cite{Hil1900} has been a central topic in the qualitative theory of differential equations for more than a century. It asks whether there exists a finite uniform upper bound on the number of limit cycles of planar polynomial vector fields of a given degree $n$. Despite considerable effort and significant progress, this problem remains open even for $n=2$ (see, for instance, \cite{DRR94}). 
For results related to Smale’s version of the problem \cite{Smale2000}, where one restricts attention to polynomial Li\'enard equations, we refer to \cite{Cau2012,DMH2015,DPR2007,HDM-generalized}.

Recently, considerable attention has been devoted to a related problem inspired by the second part of Hilbert’s 16th problem, namely, whether there exists a uniform upper bound on the number of crossing limit cycles of piecewise polynomial vector fields. When considering piecewise \textit{linear} (PWL) vector fields, using the integral characterization of Poincar\'e half-maps \cite{CFS2021}, it was proved in \cite{CFSN2023iii} that the maximum number of crossing limit cycles is uniformly bounded by $8$. To the best of our knowledge, $3$ crossing limit cycles have been found (see, for instance, \cite{BPT2013, GRS2024,HY2012,LliPon2013}). It is also worth mentioning the continuous case, whose uniform upper bound for the number of crossing limit cycles is $1$, and the bound is reached. This was conjectured by Lum and Chua in \cite{LumChu1991} and proved for the first time in \cite{FPRT1998}. The first case-independent proof of Lum--Chua's conjecture was given in \cite{CFSN2021} and, in this last reference, the approach relied on the integral characterization of Poincar\'e half-maps \cite{CFS2021}.

This paper addresses a related problem inspired by the second part of Hilbert’s 16th problem, which is the search for a uniform upper bound on the number of limit cycles of regularized piecewise polynomial systems. In \cite{HK23} the authors considered the \textit{Sotomayor--Teixeira} regularization method \cite{ST96} and they proved that there exists a piecewise polynomial vector field $Z$, formed by one quadratic and one linear vector field, such that, for a given integer $k \geq 1$, there exists a regularization function $\phi_{k}$ such that the regularization of $Z$ has $k+1$ hyperbolic limit cycles. Thus, for piecewise quadratic systems, there is no uniform upper bound on the number of limit cycles of their regularizations. It remains an open problem whether there exists a uniform upper bound on the number of limit cycles in Sotomayor--Teixeira regularizations of PWL systems, see \cite{HK24}. The present paper addresses this problem for sliding cycles associated with the generic $VV_1$ and $VI_3$ configurations, in the case where the slow divergence integral does not vanish identically. For other types of regularizations and upper bounds on the number of limit cycles, we refer to \cite{DMHP,HP}.

In \cite{HK24} the authors considered regularizations of PWL vector fields having generic visible-invisible \textit{two-folds}. Two-folds can be classified into three types: visible-visible, visible-invisible, and invisible-invisible (which we  denote by $VV$, $VI$ and $II$, respectively). Following the notation of \cite{KRG03}, each configuration is divided into several subcases, denoted by $VV_{1,2}$, $VI_{1,2,3}$, and $II_{1,2}$. This classification depends first on whether \textit{sliding} regions are present, as in the cases $VV_{1}$, $VI_{2}$, $VI_{3}$, and $II_{1}$, or absent, as in $VV_{2}$, $VI_{1}$, and $II_{2}$. When sliding occurs, one must also take into account the dynamics of the \textit{Filippov sliding vector field} and whether the unfolding gives rise to singularities of this vector field. We refer to Fig. \ref{fig-7-genric-cases} in Section \ref{sec-psvf} and \cite{GST2011,KRG03} for further details on bifurcations of generic two-folds.

The aim of \cite{HK24} was to initiate a systematic study of the cyclicity of sliding cycles (that is, the maximal number of limit cycles that can bifurcate from them) in Sotomayor--Teixeira regularizations of PWL systems exhibiting a two-fold singularity of type $VI_{3}$. The sliding cycles considered there lie in the half-plane containing the invisible fold, and it was proved that at most two limit cycles can bifurcate from such cycles. The treatement of the $VI_{3}$ case in \cite{HK24} was based on a case-by-case analysis. The main contribution of the present paper is to provide a unified, case-independent treatment of both the $VV_{1}$ and $VI_{3}$ configurations; see Fig. \ref{fig-sliding-cycles} and Section \ref{sec-main-cyclicity} for the precise definitions of $VV_{1}$ and $VI_{3}$. The sliding cycles shown in Fig. \ref{fig-sliding-cycles} contain both stable and unstable sliding segments. They are sometimes referred to as canard cycles, see \cite{HK23}, since their geometry resembles that of classical canard cycles.

\begin{figure}[htb]
	\begin{center}
		\includegraphics[width=15cm]{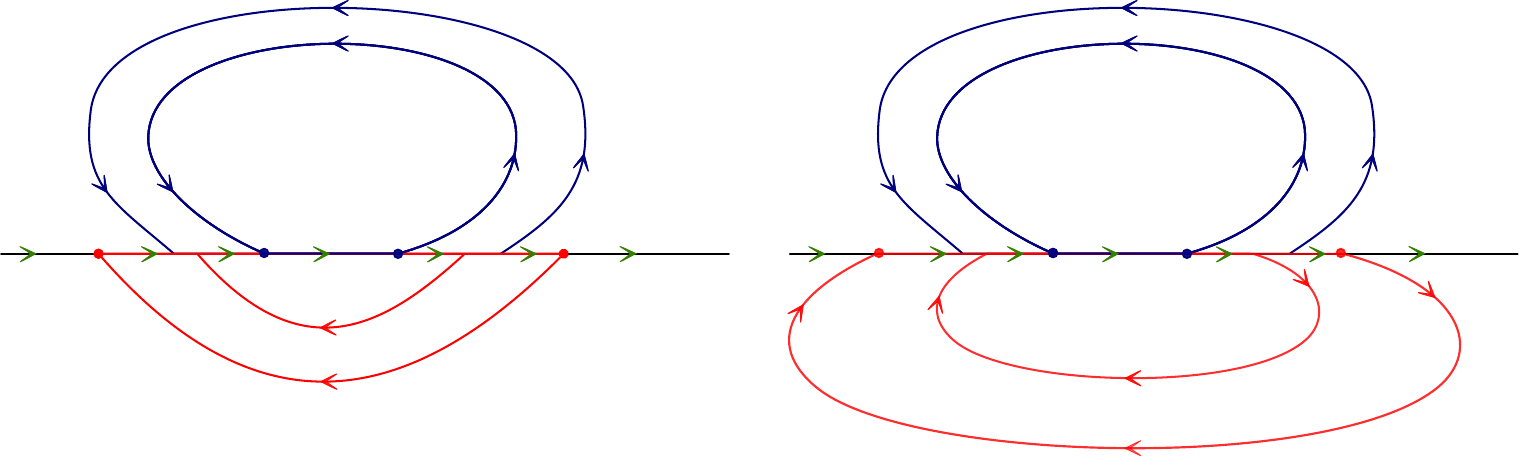}
       {\footnotesize \put(-100,60){$x_{1}$}
        \put(-140,64){$\Pi^{a}_{0}(x_{1})$}
\put(-325,60){$x_{1}$}
        \put(-365,64){$\Pi^{a}_{0}(x_{1})$}
     \put(-50,60){$x_{2}$}
        \put(-200,60){$\Pi^{b}_{0}(x_{2})$}
        \put(-275,60){$x_{2}$}
        \put(-425,60){$\Pi^{b}_{0}(x_{2})$}}
         \end{center}
	\caption{Sliding cycles addressed in this paper. Two-folds $VI_{3}$ (left) and $VV_{1}$ (right). The functions $\Pi^{a,b}_{0}$ stand for Poincar\'e half-maps from above and below, respectively. Sliding cycles from above and below are highlighted in blue and red, respectively.}
	\label{fig-sliding-cycles}
\end{figure}

The cases $VV_{1}$ and $VI_{3}$ are the only generic two-fold configurations in which sliding cycles here can occur, see again Fig. \ref{fig-7-genric-cases}. Thus, the present paper covers all such geometric sliding-cycle configurations, while the degenerate case in which the slow divergence integral vanishes identically is left for future work. 
Our approach combines techniques from geometric singular perturbation theory \cite{DMDR,HK23}, such as family blow-ups and slow divergence integrals, and integral characterization of Poincar\'e half-maps \cite{CFS2021}. The key insight is to relate the existence of zeros of the slow divergence integral to the existence of crossing limit cycles of a suitably defined auxiliary piecewise linear vector field. Such relationship is based upon the integral characterization of Poincar\'e half-maps developed in \cite{CFS2021}. This approach is based upon an idea suggested by an anonymous referee of \cite{HK24}.

As illustrated in Fig.~\ref{fig-sliding-cycles}, one may study the cyclicity of sliding cycles from both above and below. We denote these cycles by $\Gamma^{a}_{x}$ and $\Gamma^{b}_{x}$, respectively, where $x>0$. Our main results in this direction are Theorems~\ref{theo-cycicity}--\ref{mthm-singularity}, stated precisely in Sections~\ref{sec-main-cyclicity} and~\ref{sec-cycl-sing}. We postpone the technical details to later sections and first briefly outline the main contributions of the paper.

Theorem \ref{theo-cycicity} provides criteria, formulated in terms of the zeros of the slow divergence integral, for obtaining upper bounds on the number of limit cycles bifurcating from sliding cycles and for establishing their existence, both from above and from below. Similar criteria were used in the earlier works \cite{HK23,HK24}, see also \cite{DMH2015}. Roughly speaking, if the slow divergence integral has $k$ simple zeros, then the regularized system associated with either the $VV_{1}$ or the $VI_{3}$ case has $k+1$ hyperbolic limit cycles. We emphasize that Theorem \ref{theo-cycicity} is not restricted to regularized PWL systems, but applies more generally to regularized piecewise smooth systems with a two-fold singularity of type $VV_{1}$ or $VI_{3}$.
A proof of Theorem \ref{theo-cycicity} is given in Appendix \ref{sec-blow-ups}.

Theorems \ref{thm-1}--\ref{mthm-singularity} concern regularized PWL systems with a two-fold singularity of type $VV_{1}$ or $VI_{3}$. In Theorem~\ref{thm-1}, we show that each slow divergence integral, from above or below, associated with the sliding cycles shown in Fig.~\ref{fig-sliding-cycles} either vanishes identically or has at most one zero and, if such zero exists, it is simple. The proof of Theorem \ref{thm-1} is given in Section \ref{sec-zeros-sdi}. 

Theorem \ref{mhtm-c}, which follows directly from Theorems \ref{theo-cycicity} and \ref{thm-1}, states that if the slow divergence integral, from above or below, has at most one zero counted with multiplicity, then the regularized PWL system has at most two hyperbolic limit cycles bifurcating from the corresponding sliding cycles. Theorem \ref{mthm-existence-region} concerns sliding cycles associated with visible folds and  provides necessary and sufficient conditions for the corresponding slow divergence integral to have a unique simple zero. Combined with  Theorem \ref{theo-cycicity}, this yields the existence of two hyperbolic limit cycles.

Finally, for completeness, in Section~\ref{sec-cycl-sing} we state Theorem~\ref{mthm-singularity}, which concerns the cyclicity of sliding cycles when the sliding vector field has a singularity at one of the corner points; see Fig. \ref{fig:centerandfocus}. In this case, the cyclicity is at most one. The slow divergence integral associated with such sliding cycles is not well defined, and hence the case-independent approach based on \cite{CFS2021} cannot be applied. Instead, one must study the full divergence integral of the regularized PWL system. We refer to \cite{HHPY26} for the treatment of sliding cycles from below in the $VI_{3}$ case. The same approach applies to sliding cycles from above in the $VI_{3}$ case and to sliding cycles from both above and below in the $VV_{1}$ case. A unified proof of these remaining cases is given in Appendix \ref{sec-blow-ups}.

This paper is organized as follows. In Section~\ref{sec-psvf}, we recall the definitions and tools needed to state and prove our main results, including piecewise smooth vector fields, regularizations, sliding regions and Poincar\'e half-maps. The main results of the paper, Theorems \ref{theo-cycicity}--\ref{mthm-existence-region}, are stated in Section~\ref{sec-main-cyclicity}. Section~\ref{sec-pwl} is devoted to the study of PWL systems, while the proofs of Theorems \ref{thm-1} and \ref{mthm-existence-region} are given in Section \ref{sec-zeros-sdi}. In Section \ref{sec-cycl-sing}, we state Theorem~\ref{mthm-singularity}. Finally, Theorems \ref{theo-cycicity} and \ref{mthm-singularity} are proved in Appendix \ref{sec-blow-ups}.

\section{Background on piecewise smooth vector fields and regularizations}\label{sec-psvf}

\subsection{Piecewise smooth vector fields and Filippov's convention}

We define a \textit{piecewise smooth vector field} as 
\begin{equation}\label{eq-pwl-smooth}
Z_{\lambda}(x,y) = \left\{
  \begin{array}{rlc}
   Z^{+}_{\lambda}(x,y), & \text{if} & h(x,y) > 0,  \\
   Z^{-}_{\lambda}(x,y), & \text{if} & h(x,y) < 0,
  \end{array}
\right.
\qquad \lambda \sim 0,
\end{equation}
where $Z^{\pm}_{\lambda}(x,y) := Z^{\pm}(x,y,\lambda)$ and $Z_{\lambda}(x,y) := Z(x,y,\lambda)$. In a neighborhood $U\subset\mathbb{R}^{2}$ of $p\in\mathbb{R}^{2}$, the vector fields $Z^{\pm}_{\lambda}$ are $C^{\infty}$-smooth with respect to $(x,y)$ and the parameter $\lambda$, and $h:U\rightarrow \mathbb{R}$ is a $C^{\infty}$-smooth function. We further denote $Z^{\pm}_{\lambda} = (X^{\pm}_{\lambda},Y^{\pm}_{\lambda})$, and the set $\Sigma = \{(x,y)\in U; h(x,y) = 0\}$ is called \textit{switching manifold} or \textit{switching set}. In Section \ref{sec-pwl} we consider the class of \textit{piecewise linear vector fields} (PWL vector fields for short), where we assume that $Z^{\pm}_{\lambda}$ and $h$ are each affine with respect to $(x,y)$.

The \textit{Lie derivative} of $h$ with respect to the vector field $Z^{\pm}_{\lambda}$ is given by $\mathcal{L}_{Z^{\pm}_{\lambda}}(h) = \nabla h \cdot Z^{\pm}_{\lambda}$, where $\nabla h$ is the gradient vector of $h$ and the dot $\cdot$ denotes the usual scalar product. The second order Lie derivative is given by $(\mathcal{L}_{Z^{\pm}_{\lambda}})^{2}(h) = \mathcal{L}_{Z^{\pm}_{\lambda}}(\mathcal{L}_{Z^{\pm}_{\lambda}}(h)) = \nabla(\mathcal{L}_{Z^{\pm}_{\lambda}}(h)) \cdot Z^{\pm}_{\lambda}$. Lie derivatives allow us to define the following subsets of $\Sigma$.

\begin{itemize}
    \item[(1)] The \textit{sliding subset} (or \textit{sliding region}) is the set
    $$\Sigma^{sl}_{\lambda} := \{q\in\Sigma\,\vert \, \mathcal{L}_{Z^{+}_{\lambda}}(h)(q)\mathcal{L}_{Z^{-}_{\lambda}}(h)(q) < 0\}.$$
This set can be further decomposed into the \textit{attracting} (or \textit{stable}) \textit{sliding subset}, consisting of all points $q\in\Sigma$ such that $\mathcal{L}_{Z^{+}_{\lambda}}(h)(q) < 0$ and $\mathcal{L}_{Z^{-}_{\lambda}}(h)(q) > 0$; and the \textit{repelling} (or \textit{unstable}) \textit{sliding subset}, consisting of all points $q\in\Sigma$ such that $\mathcal{L}_{Z^{+}_{\lambda}}(h)(q) > 0$ and $\mathcal{L}_{Z^{-}_{\lambda}}(h)(q) < 0$.  
    \item[(2)] The \textit{crossing subset} (or \textit{crossing region}) is the set
    $$\Sigma^{cr}_{\lambda} := \{q\in\Sigma\,\vert \, \mathcal{L}_{Z^{+}_{\lambda}}(h)(q)\mathcal{L}_{Z^{-}_{\lambda}}(h)(q) > 0\}.$$
    \item[(3)] The set of \textit{tangency points} is the set
    $$\Sigma^{T}_{\lambda} := \{q\in\Sigma\,\vert \, \mathcal{L}_{Z^{+}_{\lambda}}(h)(q) = 0 \ \text{or} \ \mathcal{L}_{Z^{-}_{\lambda}}(h)(q) = 0\}.$$
    Observe that if $\mathcal{L}_{Z^{+}_{\lambda}}(h)(q) = 0$, the trajectory of $Z^{+}_{\lambda}$ through $q$ is tangent to $\Sigma$. We say that $q$ is a \textit{tangency point from above}. Analogously, if $\mathcal{L}_{Z^{-}_{\lambda}}(h)(q) = 0$, then the trajectory of $Z^{-}_{\lambda}$ through $q$ is tangent to $\Sigma$ and we call it \textit{tangency point from below}.
\end{itemize}

Along $\Sigma^{cr}_{\lambda}$, trajectories can be extended from $Z^{+}_{\lambda}$ to $Z^{-}_{\lambda}$ or vice versa by concatenating orbits of $Z^{+}_{\lambda}$ and $Z^{-}_{\lambda}$. On the other hand, trajectories of $Z^{\pm}_{\lambda}$ reach $\Sigma^{sl}_{\lambda}$ in finite time and, to be able to continue trajectories, a vector field has to be defined along $\Sigma^{sl}_{\lambda}$. For this purpose, we adopt the so called \textit{Filippov's convention} \cite{Filippov}, that is, we consider the \textit{sliding vector field} on $\Sigma^{sl}_{\lambda}$. More precisely, the sliding vector field $Z^{sl}_{\lambda}:\Sigma^{sl}_{\lambda}\rightarrow T\Sigma$ is given by

$$Z^{sl}_{\lambda}(q):= \displaystyle\frac{1}{\mathcal{L}_{Z^{+}_{\lambda}}(h)(q) - \mathcal{L}_{Z^{-}_{\lambda}}(h)(q)}\Big{(}\mathcal{L}_{Z^{+}_{\lambda}}(h)(q)Z^{-}_{\lambda}(q) - \mathcal{L}_{Z^{-}_{\lambda}}(h)(q)Z^{+}_{\lambda}(q)\Big{)}.$$

\subsection{Folds and two-folds}\label{sec-psvf-twofolds}

In this paper we pay attention to tangency points of quadratic order, and they will be called \textit{fold points}. A point $q\in\Sigma$ is a \textit{fold point of $Z^{+}_{\lambda}$} (or \textit{fold point from above}) if it satisfies
\begin{equation*}
\begin{array}{lcl}
Z^{+}_{\lambda}(q) & \neq & 0, \\
\mathcal{L}_{Z^{+}_{\lambda}}(h)(q) & = & 0, \\
(\mathcal{L}_{Z^{+}_{\lambda}})^2(h)(q) & \neq & 0.
\end{array}
\end{equation*}

The fold point $q\in\Sigma^{T}_{\lambda}$ is \textit{visible} if $(\mathcal{L}_{Z^{+}_{\lambda}})^2(h)(q) > 0$ and \textit{invisible} if $(\mathcal{L}_{Z^{+}_{\lambda}})^2(h)(q) < 0$. A \textit{fold point of $Z^{-}_{\lambda}$} (or \textit{fold point from below}) is defined in a similar fashion, however, in this case, the fold point $q\in\Sigma^{T}_{\lambda}$ is visible if $(\mathcal{L}_{Z^{-}_{\lambda}})^2(h)(q) < 0$ and invisible if $(\mathcal{L}_{Z^{-}_{\lambda}})^2(h)(q) > 0$.

When $q\in\Sigma^{T}_{\lambda}$ is a fold point of both $Z^{\pm}_{\lambda}$, it will be called \textit{two-fold}. Generic two-folds are known to present codimension $1$ bifurcations, and their bifurcation diagrams can be found in  \cite{GST2011,KRG03}. Two-folds can be classified into  three types: visible-visible, visible-invisible, and invisible-invisible, which are simply denoted by $VV$, $VI$ and $II$, respectively. Following the notation adopted in \cite{KRG03}, for each case, there are subcases (denoted by $VV_{1,2}$, $VI_{1,2,3}$ and $II_{1,2}$) that must be considered depending on (a) whether there is sliding ($VV_{1}$, $VI_{2}$, $VI_{3}$ and $II_{1}$) or not ($VV_{2}$, $VI_{1}$ and $II_{2}$) and, if there is sliding, one should take into account (b) the direction of $Z^{sl}_{\lambda}$ and finally (c) whether the unfolding leads to singularities of the sliding vector field $Z^{sl}_{\lambda}$ on $\Sigma^{sl}_{\lambda}$. We refer to Fig. \ref{fig-7-genric-cases}.

\begin{figure}[htb]
	\begin{center}
	\begin{subfigure}
    \centering
    \includegraphics[width=5cm]{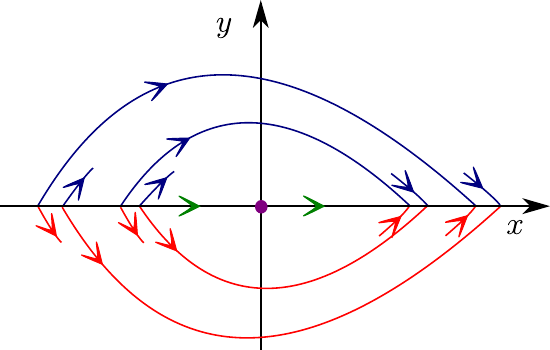}
    {\footnotesize \put(-2.99cm,-0.5cm){$II_1$}}
    \end{subfigure}    
  \hspace{1cm} 
  \begin{subfigure}
    \centering    \includegraphics[width=5cm]{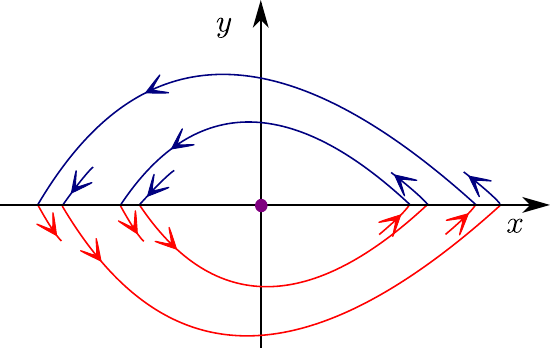}
    {\footnotesize \put(-2.99cm,-0.5cm){$II_2$}}
  \end{subfigure}
  \vspace{0,5cm}
  \vfill
  \begin{subfigure}
    \centering    \includegraphics[width=5cm]{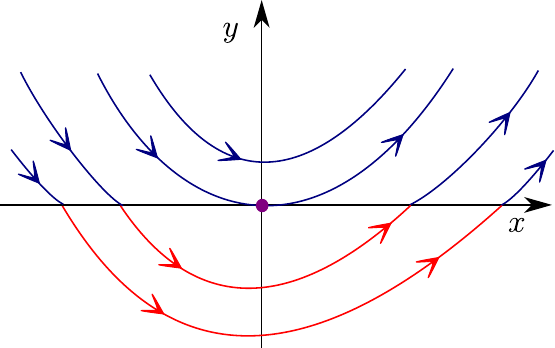}
    {\footnotesize \put(-2.99cm,-0.5cm){$VI_1$}}   
  \end{subfigure}  
  \hspace{1cm}
  \begin{subfigure}
    \centering    \includegraphics[width=5cm]{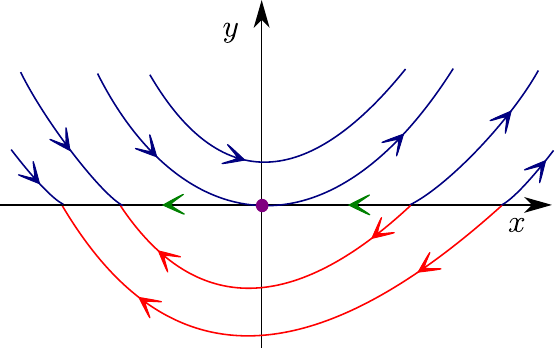}
    {\footnotesize \put(-2.99cm,-0.5cm){$VI_2$}}
  \end{subfigure}
  \vspace{0,5cm}
  \vfill
  \begin{subfigure}
    \centering    \includegraphics[width=5cm]{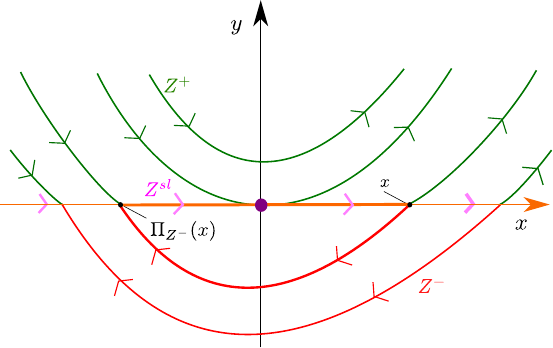}
    {\footnotesize \put(-2.99cm,-0.5cm){$VI_3$}}
  \end{subfigure}
  \hspace{1cm}
  \begin{subfigure}
    \centering    \includegraphics[width=5cm]{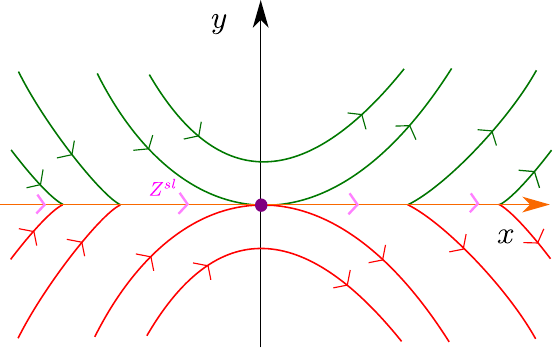}
    {\footnotesize \put(-2.99cm,-0.5cm){$VV_1$}}
  \end{subfigure}
  \vspace{0,5cm}
  \vfill
  \begin{subfigure}
    \centering    \includegraphics[width=5cm]{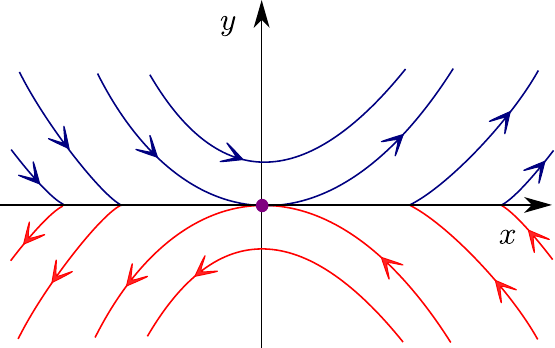}
    {\footnotesize \put(-2.99cm,-0.5cm){$VV_2$}}
  \end{subfigure}
         \end{center}
	\caption{Different types of two-folds, following the notation of \cite{KRG03}.}
	\label{fig-7-genric-cases}
\end{figure}

In this paper, we consider the cases $VV_{1}$ and $VI_{3}$. Geometrically speaking, both cases are characterized by the fact that, in a neighborhood $U$ of the two-fold $q$, the sliding vector field $Z^{sl}_{\lambda}$ points from the attracting to the repelling branch of $\Sigma^{sl}_{\lambda}$. We refer the reader to Fig. \ref{fig-sliding-cycles} and Section \ref{sec-main-cyclicity} for the precise definitions of $VV_{1}$ and $VI_{3}$. Cases $VV_{1}$ and $VI_{3}$ are the only cases whose regularization can produce canard limit cycles near generic two-folds. Observe that, in cases $VI_{2}$ and $II_{1}$, near the two-fold $q$ the sliding vector field $Z^{sl}_{\lambda}$ points from the repelling to the attracting branch of $\Sigma^{sl}_{\lambda}$. 


\subsection{Poincar\'e half-maps}

This section is devoted to define Poincar\'e half-maps of the vector fields $Z^{\pm}_{\lambda}$. Firstly, consider the vector field $Z^{+}_{\lambda}$. Given a point $p_{0}\in\Sigma$, denote the solution of the system of ODEs related to $Z^{+}_{\lambda}$ by $p_{0}$ as $\Psi^{+}_{\lambda}(t;p_{0})$, and observe that $\Psi^{+}_{\lambda}(0;p_{0}) = p_{0}$. Now, suppose that there exists a real positive number $\tau = \tau(p_{0}) < +\infty$ such that $\Psi^{+}_{\lambda}(\tau; p_{0}) = p_{1}\in\Sigma$ and, for all time $t\in(0,\tau)$, it holds the inclusion $\{\Psi^{+}_{\lambda}(t; p_{0}); 0 < t < \tau\}\subset \{(x,y)\in U; h(x,y) > 0\}$. Under these assumptions, the function that maps $p_{0}$ to $p_{1}$ is called \textit{Poincar\'e half-map} of $Z^{+}_{\lambda}$ and will be denoted by $\Pi^{a}_{\lambda}$. The positive and finite value $\tau$ is called \textit{flight time}. The Poincar\'e half-map of $Z^{-}_{\lambda}$ is defined in the same fashion and will be denoted by $\Pi^{b}_{\lambda}$. We choose the notation $\Pi^{a,b}_{\lambda}$ for the Poincar\'e half-maps of $Z^{\pm}_{\lambda}$, where $a$ stands for ``above'' and $b$ stands for ``below''.

\subsection{Sliding cycles}

A \textit{sliding cycle} of the piecewise smooth vector field $Z_{\lambda}$ is a closed curve given by the concatenation of compact pieces of the sliding subset $\Sigma^{sl}_{\lambda}$ and regular orbits of $Z^{+}_{\lambda}$ or $Z^{-}_{\lambda}$, in such a way that the orientation of $Z^{sl}_{\lambda}$ and $Z^{\pm}_{\lambda}$ agree. In some references, sliding cycles are called \textit{poly-trajectories} (see for instance \cite{SM2002}). The sliding cycles that will be addressed in this paper will be defined  precisely in Section \ref{sec-main-cyclicity}. 

\subsection{Regularizations of piecewise smooth vector fields}\label{sec-psvf-reg}

Let $\phi:\mathbb{R}\rightarrow (0,1)$ be a $C^{\infty}$-smooth function satisfying the following assumptions.
\begin{assumption}\label{assump-reg1}
For $s\to\pm\infty$, the function $\phi$ satisfies
$$
\phi(s)\to 1 \text{ if }  s\to\infty, \quad \text{ and } \quad \phi(s)\to 0 \text{ if }  s\to-\infty.
$$    
\end{assumption}
\begin{assumption}\label{assump-reg2}
The function $\phi$ is strictly monotone, that is,
$\phi'(s) > 0$ for all $s\in\mathbb{R}$.    
\end{assumption}
\begin{assumption}\label{assump-reg3}
The function $\phi$ is \textit{smooth at $\pm\infty$} in the following sense: both functions
$$
\phi_{+}(s) :=
\begin{cases}
1, & \text{for } s=0,\\
\phi(s^{-1}), & \text{for } s>0,
\end{cases}
\qquad 
\phi_{-}(s):=
\begin{cases}
\phi(-s^{-1}), & \text{for } s>0,\\
0, & \text{for } s=0,
\end{cases}
$$
are $C^\infty$-smooth at $s=0$.    
\end{assumption}

If $\phi$ satisfies assumptions \ref{assump-reg1}, \ref{assump-reg2} and \ref{assump-reg3}, then it will be called a \textit{regularization function}. Given a regularization function $\phi$, a \textit{regularization} of the piecewise smooth vector field $Z_{\lambda}$ is the $C^{\infty}$-smooth vector field
\begin{equation}\label{eq-psvf-reg}
Z_{\varepsilon,\lambda}^{\phi}(x,y) = Z^{+}_{\lambda}(x,y)\phi\left( \frac{h(x,y)}{\varepsilon^{2}}\right)
+ Z^{-}_{\lambda}(x,y)\left(1-\phi\left(\frac{h(x,y)}{\varepsilon^{2}}\right)\right), 
\end{equation}
where $0 < \varepsilon \ll 1$ and $\lambda \sim 0$. We also say that $Z_{\varepsilon, \lambda}^{\phi}$ is the \textit{regularized vector field}. Observe that $Z_{\varepsilon,\lambda}^{\phi} \rightarrow Z_{\lambda}$ uniformly on compact subsets of $U\backslash \Sigma$ (recall that the open set $U\subset\mathbb{R}^{2}$ is the domain of $Z_{\lambda}$) when $\varepsilon\rightarrow 0$ (see \cite{PanSil2017}). Moreover, we keep the superscript $\phi$ in the notation $Z_{\varepsilon,\lambda}^{\phi}$ in order to stress that the regularization strongly depends on the function $\phi$. Indeed, different $\phi$ may lead to different dynamics of the regularized vector field $Z_{\varepsilon,\lambda}^{\phi}$ (see for instance \cite{HK23}). The regularization process adopted in this paper is due to Sotomayor and Teixeira \cite{ST96} and we refer to \cite{DMHP, HP, PanSil2017, PRS2023} and the references therein for other regularization processes.

It was observed in \cite{BST06} that the system of ODE's associated to the regularized vector field \eqref{eq-psvf-reg} is a \textit{slow-fast system}. In addition, the authors proved that the existence of sliding subsets associated to the piecewise smooth vector field \eqref{eq-pwl-smooth} implies the existence of local invariant manifolds of \eqref{eq-psvf-reg}, whose reduced flow is a regular perturbation of the sliding vector field $Z^{sl}_{\lambda}$.

\section{Statement of the Main Results}\label{sec-main-cyclicity}

In this section and in the rest of the paper, we consider a coordinate system such that the smooth function $h$ is given by $h(x,y) = y$ (that is, the switching manifold is given by $\Sigma = \{y = 0\}$) and the two-fold of $Z_{0}$ for $\lambda = 0$ (which is either $VV_{1}$ or $VI_{3}$) is positioned at the origin.

Following \cite{KRG03}, we say that $Z_{0}$ has a two-fold of type $VV_{1}$ at the origin $(x,y)=(0,0)$ if 
\begin{equation}\label{eq-prop-vv1}
\left\{
\begin{array}{rcl}
X^{+}_{0}(0,0) & > &  0, \\
Y^{+}_{0}(0,0) & = & 0, \\
\displaystyle\frac{\partial Y^{+}_{0}}{\partial x}(0,0) & > & 0,
\end{array}
\right.
\qquad
\left\{
\begin{array}{rcl}
X^{-}_{0}(0,0) & > & 0, \\
Y^{-}_{0}(0,0) & = & 0, \\
\displaystyle\frac{\partial Y^{-}_{0}}{\partial x}(0,0) & < & 0.
\end{array}
\right.
\end{equation}

Similarly, we say that $Z_{0}$ has a two-fold of type $VI_{3}$ at the origin $(x,y)=(0,0)$ if 
\begin{equation}\label{eq-prop-vi3}
\left\{
\begin{array}{rcl}
X^{+}_{0}(0,0) & > &  0, \\
Y^{+}_{0}(0,0) & = & 0, \\
\displaystyle\frac{\partial Y^{+}_{0}}{\partial x}(0,0) & > & 0,
\end{array}
\right.
\qquad
\left\{
\begin{array}{rcl}
X^{-}_{0}(0,0) & < & 0, \\
Y^{-}_{0}(0,0) & = & 0, \\
\displaystyle\frac{\partial Y^{-}_{0}}{\partial x}(0,0) & < & 0.
\end{array}
\right.
\end{equation} and
\begin{equation}\label{VI3-extra-condition}
    \left(\frac{\partial Y^{+}_{0}}{\partial x}X^{-}_{0} - \frac{\partial Y^{-}_{0}}{\partial x}X^{+}_{0}\right)(0,0)>0.
\end{equation}

The goal of this paper is to provide cyclicity results of regularized PWL vector fields near two-folds, and our Theorems and proofs concern a unified approach for both cases $VV_{1}$ and $VI_{3}$. Observe that, when $\Sigma = \{y = 0\}$, the sliding vector field is given by
\begin{equation}\label{eq-sliding-x}
Z^{sl}_{\lambda}(x,0) =: \left(X^{sl}_{\lambda}(x),0\right) = \left(\displaystyle\frac{\operatorname{det}Z_{\lambda}}{Y^{+}_{\lambda} - Y^{-}_{\lambda}}(x,0),0\right), \ \operatorname{det}Z_{\lambda}(x) = \left(Y^{+}_{\lambda}X^{-}_{\lambda} - Y^{-}_{\lambda}X^{+}_{\lambda}\right)(x,0),
\end{equation}
where the first equality defines $X^{sl}_{\lambda}$. Due to Equation \eqref{eq-sliding-x}, in this section and in the rest of this paper we refer to $X^{sl}_{\lambda}$ as the sliding vector field. Straightforward computations using the definitions of $VV_{1}$ and $VI_{3}$ lead us to the following proposition.

\begin{proposition}\label{prop-vv1}
Assume that the piecewise smooth vector field $Z_{0}$ has a two-fold of type $VV_{1}$ or $VI_{3}$ at the origin $(x,y)=(0,0)$. 
Then, there exist constants $\delta_{1} < 0$ and $\delta_{2}> 0$ such that the following statements hold for $\lambda = 0$.
\begin{itemize}
    \item[(1)] The intervals $[\delta_{1},0)\subset\Sigma^{sl}_{0}$ and $(0,\delta_{2}]\subset\Sigma^{sl}_{0}$ correspond to the stable and unstable sliding regions, respectively.
    \item[(2)] The sliding vector field $X^{sl}_{0}$ is well defined in $[\delta_{1},\delta_{2}]$ and $X^{sl}_{0}(x) > 0$, for all $x\in[\delta_{1},\delta_{2}]$.
    \item[(3)] It holds $\frac{\partial Y^{+}_{0}}{\partial x}(0,0) - \frac{\partial Y^{-}_{0}}{\partial x}(0,0) > 0$.
\end{itemize}
\end{proposition}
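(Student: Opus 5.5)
Item (3) comes straight from the sign conditions in \eqref{eq-prop-vv1}--\eqref{eq-prop-vi3}. Items (1) and (2) then follow from first-order Taylor expansions of $Y^{\pm}_0(x,0)$ at $x=0$, together with continuity of $X^{\pm}_0$. We fix $\lambda=0$, write $a^{\pm}:=\frac{\partial Y^{\pm}_0}{\partial x}(0,0)$, and note that $a^+>0>a^-$ in both cases $VV_1$ and $VI_3$. This gives (3) immediately, since $a^+-a^->0$.

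For (1), we use $h(x,y)=y$, so that $\mathcal{L}_{Z^\pm_0}(h)(x,0)=Y^\pm_0(x,0)$. Since $Y^\pm_0(0,0)=0$, we have $Y^\pm_0(x,0)=a^\pm x+O(x^2)$. Hence there is a neighborhood $[\delta_1,\delta_2]$ of $0$ in which $Y^+_0(x,0)$ has the sign of $x$ and $Y^-_0(x,0)$ has the sign of $-x$, for $x\neq 0$. On $[\delta_1,0)$ this gives $Y^+_0<0<Y^-_0$, which is the attracting sliding region. On $(0,\delta_2]$ it gives $Y^+_0>0>Y^-_0$, which is the repelling sliding region.

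For (2), the only issue is the removable singularity at $x=0$, where both the numerator and the denominator of \eqref{eq-sliding-x} vanish. The plan is to write $Y^\pm_0(x,0)=x\,g^\pm(x)$ with smooth $g^\pm$ satisfying $g^\pm(0)=a^\pm$; this is Hadamard's lemma, and in the PWL setting it is exact. Then
\[
X^{sl}_0(x)=\frac{g^+(x)X^-_0(x,0)-g^-(x)X^+_0(x,0)}{g^+(x)-g^-(x)},
\]
and this expression extends smoothly to $x=0$ because $g^+(0)-g^-(0)=a^+-a^->0$ by (3). At $x=0$ the value is
\[
X^{sl}_0(0)=\frac{a^+X^-_0(0,0)-a^-X^+_0(0,0)}{a^+-a^-}.
\]

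The main step is checking that $X^{sl}_0(0)>0$, which is the one place where the two cases differ. In case $VV_1$ we have $X^\pm_0(0,0)>0$, so both $a^+X^-_0>0$ and $-a^-X^+_0>0$, and the numerator is positive. In case $VI_3$ the term $a^+X^-_0$ is negative, so sign information alone does not settle it. Here the numerator is exactly the left-hand side of the extra condition \eqref{VI3-extra-condition}, which makes it positive. In both cases $X^{sl}_0(0)>0$.

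To finish, continuity of $X^{sl}_0$ on a neighborhood of $0$ lets us shrink $\delta_1<0<\delta_2$ if necessary. This keeps the denominator $g^+-g^-$ positive, preserves the sign properties used in (1), and keeps $X^{sl}_0>0$ on all of $[\delta_1,\delta_2]$.
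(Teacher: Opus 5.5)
Your proof is correct and follows essentially the same route as the paper. The sign conditions on $\partial_x Y^{\pm}_0(0,0)$ give (1) and (3). For (2), you cancel the common factor $x$ from the numerator and denominator of $X^{sl}_0$, which you make precise via Hadamard's lemma. You then read off $X^{sl}_0(0)>0$ from the defining inequalities, using \eqref{VI3-extra-condition} in the $VI_3$ case. Your explicit continuity-and-shrinking step, which gives $X^{sl}_0>0$ on all of $[\delta_1,\delta_2]$, fills in a detail the paper leaves implicit.
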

\begin{proof}
The proposition follows from \cite{KRG03}. We nevertheless include the proof for completeness. Assume that either \eqref{eq-prop-vv1} holds, corresponding to the case $VV_{1}$, or that \eqref{eq-prop-vi3} together with \eqref{VI3-extra-condition} holds, corresponding to the case $VI_{3}$. 
Since we have $Y^{\pm}_{0}(0,0) = 0$, $\frac{\partial Y^{+}_{0}}{\partial x}(0,0) > 0$ and $\frac{\partial Y^{-}_{0}}{\partial x}(0,0) < 0$, then we have a stable sliding region for $x < 0$ and an unstable sliding region for $x > 0$, at least for $x$ sufficiently close to zero. Thus items (1) and (3) are true. It remains to check item (2). Observe that, by item (1), $X^{sl}_{0}$ is well defined in $[\delta_{1},\delta_{2}]\setminus \{0\}$ and extends smoothly to $x=0$. Indeed, since $Y_0^+-Y_0^-$ has a simple zero at the origin, both the numerator and the denominator in \eqref{eq-sliding-x} have a common factor. We define 
\begin{equation*}
X^{sl}_{0}(0) := \displaystyle\lim_{x \rightarrow 0} \left(\displaystyle\frac{\operatorname{det}Z_{0}}{Y^{+}_{0} - Y^{-}_{0}}\right)(x,0) = \left(\displaystyle\frac{\frac{\partial Y^{+}_{0}}{\partial x}X^{-}_{0} - \frac{\partial Y^{-}_{0}}{\partial x}X^{+}_{0}}{\frac{\partial Y^{+}_{0}}{\partial x} - \frac{\partial Y^{-}_{0}}{\partial x}}\right)(0,0).
\end{equation*}

Since $\left(\frac{\partial Y^{+}_{0}}{\partial x}X^{-}_{0} - \frac{\partial Y^{-}_{0}}{\partial x}X^{+}_{0}\right)(0,0)>0$ in both cases, the proof of item (2) is complete.
\end{proof}

\begin{assumption}\label{assump-versal}
Throughout this paper, the two-fold $0\in\Sigma_{0}^{T}$ is $VV_{1}$ or $VI_{3}$, and the piecewise smooth vector field $Z_{\lambda}$ given in Equation \eqref{eq-pwl-smooth} with $h(x,y) = y$ satisfies the following assumption at $(x,y,\lambda) = (0,0,0)$:
\begin{equation}\label{eq-regularity}
\displaystyle\frac{\partial Y^{-}_{0}}{\partial \lambda}\displaystyle\frac{\partial Y^{+}_{0}}{\partial x} \neq \displaystyle\frac{\partial Y^{+}_{0}}{\partial \lambda}\displaystyle\frac{\partial Y^{-}_{0}}{\partial x}.
\end{equation}
\end{assumption}

The condition in \eqref{eq-regularity} played an important role in \cite{HK23,HK24} in proving the existence of limit cycles in regularizations of systems with a two-fold of type $VI_{3}$. Assumption \ref{assump-versal} can easily be satisfied by a suitable one-parameter unfolding. For example, one may take
\begin{equation*}
Y^{+}_{\lambda}(x,y) = Y^{+}_{0}(x,y)+\lambda,
\qquad
Y^{-}_{\lambda}(x,y) = Y^{-}_{0}(x,y).    
\end{equation*}
Since $\frac{\partial Y^{+}_{0}}{\partial x}(0,0) > 0$ and $\frac{\partial Y^{-}_{0}}{\partial x}(0,0) < 0$, such an unfolding satisfies \eqref{eq-regularity}. See also \cite{BLM18} for further details on the meaning of such condition.

We now define what kind of sliding cycles we are interested in. Assume that the Poincar\'e half-maps $\Pi^{a,b}_{0}$ of $Z^{\pm}_{0}$ are well defined in a compact interval $K\subset(0,\delta_{2}]$, and their ranges are contained in $[\delta_{1}, 0)$, where $\delta_1$ and $\delta_2$ are given by Proposition~\ref{prop-vv1}. Now, choose $x\in K$. The sliding cycle given by the union of the compact segment $[\Pi^{a}_{0}(x),x]\subset [\delta_{1}, \delta_{2}]$ and the arc of trajectory of $Z^{+}_{0}$ passing through $x$ will be denoted by $\Gamma^{a}_{x}$. Analogously, the sliding cycle given by the union of the compact segment $[\Pi^{b}_{0}(x),x]\subset [\delta_{1}, \delta_{2}]$ and an arc of trajectory of $Z^{-}_{0}$ passing through $x$ will be denoted by $\Gamma^{b}_{x}$, see Fig. \ref{fig-sliding-cycles}. It is important to remark that in the notation $\Gamma^{a,b}_{x}$, the letters $a$ and $b$ will be used to refer to sliding cycles from ``above'' and ``below'', respectively.

Suppose that Assumptions \ref{assump-reg1}-\ref{assump-versal} hold. Then the \textit{slow divergence integral} along the segment $[\Pi^{a,b}_{0}(x),x]\subset [\delta_{1}, \delta_{2}]$ is defined as
\begin{equation}\label{eq-sdi-general}
I^{a,b}(x) = \displaystyle\int_{\Pi^{a,b}_{0}(x)}^{x}\frac{(Y^{+}_{0} - Y^{-}_{0})^{2}}{\operatorname{det}Z_{0}}(s,0)\phi'\left(\phi^{-1}\left(\frac{-Y^{-}_{0}}{Y^{+}_{0} - Y^{-}_{0}}(s,0)\right)\right)ds,
\end{equation}
where $\det Z_{0}$ is given in Equation \eqref{eq-sliding-x}. The slow divergence integral is the integral of the divergence of the associated slow-fast system for $\varepsilon = 0$ (see \eqref{(3.4)} in Appendix \ref{sec-blow-ups}) whose integration variable is the time of the sliding vector field $X^{sl}_{0}$ (see \cite[Chapter 5]{DMDR} for further details on slow divergence integrals). In our setting, the slow divergence integral \eqref{eq-sdi-general} is independent of whether the origin is $VV_{1}$ or $VI_{3}$. On the other hand, the integral $I^{a}$ is related to sliding cycles from ``above'', whereas $I^{b}$ is related to sliding cycles from ``below''.

In the next definition, consider the vector field \eqref{eq-psvf-reg} with $h(x,y) = y$ and introduce the rescaling
\begin{equation*}
    \lambda = \varepsilon \widetilde{\lambda}, \qquad \widetilde{\lambda} \sim 0,
\end{equation*}
and then we deal with $Z_{\varepsilon,\varepsilon \widetilde{\lambda}}^{\phi}$.

\begin{definition}
Suppose $x_{0}\in K$. The \textit{cyclicity of the sliding cycle $\Gamma^{a,b}_{x_0}$} is bounded by $N \in \mathbb{N}$ if there exist $\varepsilon_0 > 0$, $\delta_{0} > 0$, and a neighborhood $\mathcal{U}$ of $0$ in the $\widetilde{\lambda}$-space such that $Z_{\varepsilon,\varepsilon \widetilde{\lambda}}^{\phi}$ has at most $N$ limit cycles, lying within Hausdorff distance $\delta_{0}$ of $\Gamma^{a,b}_{x_0}$, for all $(\varepsilon,\widetilde\lambda) \in (0,\varepsilon_0] \times \mathcal{U}$. We call the smallest $N$ with this property the \textit{cyclicity of $\Gamma^{a,b}_{x_0}$} and denote it by $\operatorname{Cycl}(\Gamma^{a,b}_{x_0})$. The cyclicity $\operatorname{Cycl}\left(\cup_{x\in K}\Gamma^{a,b}_{x}\right)$ is defined in a similar fashion.
\end{definition}

We have the following general result.

\begin{mtheorem}\label{theo-cycicity}
Consider the (rescaled) regularized vector field $Z_{\varepsilon,\varepsilon \widetilde{\lambda}}^{\phi}$ and suppose that Assumptions \ref{assump-reg1} to \ref{assump-versal} are satisfied. If $x_{0}\in K$, then the following statements are true. 
\begin{itemize}
    \item[(1)] If $I^{a}(x_0) < 0$ (resp.\ $I^{a}(x_0) > 0$), then $\operatorname{Cycl}(\Gamma^{a}_{x_0}) = 1$ and the limit cycle is hyperbolic and attracting (resp.\ repelling) when it exists. Moreover, if $I^{a}$ has no zeros in $K$, then
    \begin{equation*}
\operatorname{Cycl}\left(\displaystyle\bigcup_{x \in K} \Gamma^{a}_{x} \right) = 1.    
    \end{equation*}

    \item[(2)] If $I^{a}$ has a zero of multiplicity $l \geq 1$ at $x = x_{0}$, then
    $\operatorname{Cycl}(\Gamma^{a}_{x_{0}}) \leq l + 1$. When $I^{a}$ has at most $l \geq 1$ zeros in $K$ counting multiplicity, then 
    \begin{equation*}
\operatorname{Cycl}\left(\displaystyle\bigcup_{x \in K} \Gamma^{a}_{x} \right) \leq l + 1.    
    \end{equation*}

    \item[(3)] Suppose that $I^{a}$ has exactly $l \geq 1$ simple zeros $
    x_{1} < \cdots < x_{l}$ in the interior of $K$. If $x_{l+1}\in K$ and $x_{l+1} > x_{l}$, then there exists a smooth function $\widetilde{\lambda}_{c} = \widetilde{\lambda}_{c}(\varepsilon)$ with $ \widetilde{\lambda}_{c}(0) = 0$, such that \eqref{eq-psvf-reg} with $Z^{\phi}_{\varepsilon,\varepsilon\widetilde{\lambda}_{c}(\varepsilon)}$ has $l+1$ periodic orbits $
    \mathcal{O}_1^{\varepsilon}, \ldots, \mathcal{O}_{l+1}^{\varepsilon}$,
    for each $\varepsilon \sim 0$ and $\varepsilon > 0$. The periodic orbit $\mathcal{O}_{i}^{\varepsilon}$ is isolated, hyperbolic, and Hausdorff close to the sliding cycle $\Gamma^{a}_{x_{i}}$, for each $i = 1, \ldots, l+1$.
\end{itemize}
One has completely analogous statements concerning zeros of $I^{b}$ and cyclicity of sliding cycles from below $\Gamma^{b}_{x}$.
\end{mtheorem}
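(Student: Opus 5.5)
We will prove Theorem \ref{theo-cycicity} with the standard slow–fast machinery for canard cycles: write the regularization as a slow–fast system, blow up the two-fold, and reduce cycle counting to zeros of a difference map whose leading part is the slow divergence integral. Throughout we argue for $\Gamma^{a}_{x}$; the case of $\Gamma^{b}_{x}$ is identical after reflecting the roles of $Z^{\pm}$.

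\textbf{Step 1: slow–fast form.} With $h=y$ we introduce $y=\varepsilon^{2}\hat y$, following \cite{BST06}. Then $Z^{\phi}_{\varepsilon,\lambda}$ becomes the slow–fast system
\[
x'=\varepsilon^{2}\bigl(X^{+}\phi(\hat y)+X^{-}(1-\phi(\hat y))\bigr),\qquad
\hat y'=Y^{+}\phi(\hat y)+Y^{-}(1-\phi(\hat y)),
\]
evaluated at $(x,\varepsilon^{2}\hat y,\lambda)$ and after a time rescaling.
\begin{itemize}
\item The critical manifold for $\varepsilon=0$ is the graph $\hat y=\phi^{-1}\bigl(-Y^{-}_{0}/(Y^{+}_{0}-Y^{-}_{0})\bigr)$ over $\Sigma^{sl}_{0}$.
\item It is normally attracting for $x<0$ and normally repelling for $x>0$, by Proposition \ref{prop-vv1}.
\item The reduced flow is $X^{sl}_{0}$, which is positive on $[\delta_{1},\delta_{2}]$.
\item The normal eigenvalue along the critical manifold is $(Y^{+}_{0}-Y^{-}_{0})\phi'(\hat y)$.
\end{itemize}
Dividing this eigenvalue by the slow speed $X^{sl}_{0}=\det Z_{0}/(Y^{+}_{0}-Y^{-}_{0})$ gives exactly the integrand of \eqref{eq-sdi-general}. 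So $I^{a}(x)$ is the slow divergence integral along $[\Pi^{a}_{0}(x),x]$. Near $\hat y=\pm\infty$ we use Assumption \ref{assump-reg3}, i.e. the coordinates $\phi_{\pm}$, to compactify. The fast orbit of $Z^{+}_{0}$ from $x$ to $\Pi^{a}_{0}(x)$ then becomes a regular orbit on the boundary at $\hat y=+\infty$ with $C^{\infty}$ transitions.

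\textbf{Step 2: the turning point at the two-fold.} The critical manifold changes stability at the origin, where $Y^{+}_{0}-Y^{-}_{0}$ has a simple zero (Proposition \ref{prop-vv1}(3)). This is a canard-type turning point, and the sliding flow passes through it at nonzero speed.

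We blow it up in $(x,\hat y,\varepsilon,\widetilde\lambda)$ with weights adapted to $\lambda=\varepsilon\widetilde\lambda$, as in \cite{HK23,HK24}. In the family chart we expect a linear–quadratic normal form of slow passage type. Assumption \ref{assump-versal}, i.e. \eqref{eq-regularity}, makes $\widetilde\lambda$ a regular breaking parameter for the connection between the attracting and repelling slow manifolds. This yields a smooth codimension-one function $\widetilde\lambda_{c}(\varepsilon)$ along which that connection persists.

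\textbf{Step 3: difference map.} Take a section $\sigma$ transverse to the fast orbit through $x$ at $\hat y=+\infty$, parametrized by $x\in K$ (a neighborhood of $x_0$ in parts (1)–(2)).
\begin{itemize}
\item Flowing forward to $\Pi^{a}_{0}(x)$ and along the attracting slow manifold, the transition to a middle section beyond the turning point is exponentially contracting, with rate $e^{(\int_{\Pi^{a}_{0}(x)}^{0}\cdots+o(1))/\varepsilon^{2}}$.
\item Flowing backward from $x$ along the repelling branch gives the analogous backward transition.
\end{itemize}
Following \cite[Ch.~5]{DMDR} and \cite{DMH2015}, limit cycles near $\Gamma^{a}_{x}$ correspond to zeros in $x$ of a difference map $\Delta(x,\varepsilon,\widetilde\lambda)$ on the middle section. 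Its derivative satisfies
\[
\partial_{x}\Delta = e^{(I^{a}(x)+o(1))/\varepsilon^{2}}\cdot(\text{nonzero factor})-(\text{comparable factor}),
\]
or, in the standard normalized form, $\Delta$ has the structure whose $x$-derivative is controlled by $I^{a}(x)$.

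\textbf{Step 4: conclusions.}
\begin{itemize}
\item Part (1): if $I^{a}(x_0)\neq0$, then $\partial_x\Delta\neq0$, so there is at most one zero. The cycle is hyperbolic, with stability given by the sign of $I^{a}$, since the Floquet exponent is $I^{a}(x)/\varepsilon^{2}+o(\varepsilon^{-2})$. Compactness of $K$ gives the uniform statement.
\item Part (2): a zero of $I^{a}$ of multiplicity $l$ gives at most $l$ zeros of $\partial_x\Delta$, hence at most $l+1$ zeros of $\Delta$ by Rolle's theorem.
\item Part (3): the standard construction of \cite{DMH2015,HK23} applies. Along $\widetilde\lambda=\widetilde\lambda_{c}(\varepsilon)$, the sign changes of $I^{a}$ at $x_1,\dots,x_l$, together with the endpoint $x_{l+1}$, produce $l+1$ sign changes of $\Delta$. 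These give hyperbolic cycles near $\Gamma^{a}_{x_i}$.
\end{itemize}

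\textbf{Main obstacle.} We expect the main difficulty to be Step 2–3. One must show that the blow-up near the two-fold, including the chart at $\hat y=\pm\infty$ where $\phi$ is only smooth via $\phi_\pm$, produces transitions that are smooth in $(x,\varepsilon,\widetilde\lambda)$ with only $o(1)$ corrections in the exponent, uniformly in $x\in K$. We would first treat this by verifying that after the blow-up the system is locally a smooth slow–fast canard point, and then quote the known smoothness results for canard transitions.
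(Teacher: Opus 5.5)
Your outline has the same architecture as the paper's proof. The chart $y=\varepsilon^{2}\hat y$ is the chart $\overline{\varepsilon}=1$ of the primary blow-up, compactifying with $\phi_{\pm}$ is what the charts $\overline{y}=\pm1$ do, and your Steps 3--4 are the difference-map and Rolle argument of \cite{HK23}, which the paper also only cites. The gap is that the part the paper actually proves is exactly what you leave as the ``main obstacle''. Moreover, the route you sketch for it (reduce to a smooth canard point and quote canard-point transition results) does not fit the local geometry. Since $Y^{\pm}(0,0,0)=0$, the layer problem at $\varepsilon=0$ has critical set $H\cup C$, not just $C$. The whole fast fiber $H=\{x=0\}$ consists of nilpotent equilibria, and $p_0=H\cap C$ has identically zero linear part. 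Meanwhile $C$ is a regular graph over $x$, and the slow flow is regular at $p_0$ because $X^{sl}_0(0)>0$. So this is not a fold-type canard point but a turning point of the kind treated in \cite{DD05}. One must blow up the whole line $H$ cylindrically, $(x,\varepsilon)=(\rho\widetilde x,\rho\widetilde\varepsilon)$ with $\hat y$ not rescaled, and check the hypotheses T0--T6 of \cite{DD05}:
\begin{itemize}
\item normal hyperbolicity of the end points of $C$ on the new cylinder, with eigenvalue $(\partial_xY^{+}-\partial_xY^{-})(0,0,0)\,\phi'(\hat y_c)>0$ by Proposition \ref{prop-vv1}(3) and Assumption \ref{assump-reg2}, where $\hat y_c$ is the $\hat y$-coordinate of $p_0$;
\item a hyperbolic saddle in $(\rho,\widetilde\varepsilon)$ after division by $\widetilde\varepsilon^{2}$, which uses $X^{sl}_0(0)>0$;
\item in the family chart $x=\varepsilon x_2$, the invariant line $\{\hat y=\hat y_c\}$ carrying $\dot x_2=X^{sl}_0(0)>0$ and connecting the two branches of $C$; there are no singularities in the $VV_1$ case since $X^{\pm}(0,0,0)>0$, whereas in the $VI_3$ case a center lies below this line;
\item regular breaking of this connection, which is where \eqref{eq-regularity} enters.
\end{itemize}

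You also pass too quickly over the corners of the limit periodic set on the edge of the primary cylinder. Where a fast fiber of $C$ meets the orbit of $Z^{+}_0$ (or of $Z^{-}_0$, in the chart $\overline{y}=-1$), there is a saddle-type point with eigenvalues $0$ and $\pm Y^{\pm}(x,0,0)/2$ of equal modulus. Proposition \ref{prop-vv1}(1) is what guarantees, in both the $VV_1$ and $VI_3$ cases, that the fiber arrives at the corner for $x>0$ and leaves it for $x<0$. The passage there is a Dulac-type transition that has to be controlled as in \cite{HK23}, not a regular $C^{\infty}$ flow-box passage. Only after all these checks do you get the $o(1)$ term in the exponent with the $C^{l}$-control in $x$ that your Rolle argument in Part (2) needs.

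Two smaller points. In Part (1) you only prove ``at most one'', whereas $\operatorname{Cycl}(\Gamma^{a}_{x_0})=1$ also requires existence; this comes from moving $\widetilde\lambda$ across the connection using the regular breaking. Also, passing from $\Gamma^{a}$ to $\Gamma^{b}$ is not a symmetry in the $VI_3$ case, since $y\mapsto -y$ does not preserve that configuration. There the same checks must be repeated in the chart $\overline{y}=-1$.
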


The proof of Theorem \ref{theo-cycicity} was addressed in \cite{HK23} for sliding cycles from below $\Gamma^{b}_{x}$ in the case $VI_{3}$ (that is, in the half-plane with the invisible fold point). In Appendix \ref{sec-blow-ups}, we prove Theorem \ref{theo-cycicity} for sliding cycles from above and below in the $VV_{1}$ case; and for the sliding cycles from above in the $VI_{3}$ case. The proof is similar to the proof of \cite[Theorem 3.1]{HK23} once we show that these new cases have similar properties after cylindrical blow-ups. We point out that Theorem \ref{theo-cycicity} is a general result, which can be applied not only to regularized PWL systems, but also to any regularized PWS system satisfying Assumptions \ref{assump-reg1} to \ref{assump-versal}. Finally, we remark that Theorem \ref{theo-cycicity} does not cover limit cycles that can be born from the origin $0\in\Sigma$. 

Next, our goal is to apply Theorem \ref{theo-cycicity} to regularized PWL systems. For
$\lambda = 0$, consider the PWL vector field
\begin{equation}
\label{pwl-system-added}
Z_0(x,y)=
\begin{cases}
Z_0^+(x,y)=
\bigl(
b^+ +a_{1,1}^+x+a_{1,2}^+y,\,
a_{2,1}^+x+a_{2,2}^+y
\bigr), & y>0,\\[1mm]
Z_0^-(x,y)=
\bigl(
b^- +a_{1,1}^-x+a_{1,2}^-y,\,
a_{2,1}^-x+a_{2,2}^-y
\bigr), & y<0,
\end{cases}
\end{equation}
where $b^+>0$, $a_{2,1}^+>0$, and $a_{2,1}^-<0$. Moreover,
$b^->0$ in the $VV_1$ case, whereas $b^-<0$ in the $VI_3$
case. Let $D_\pm$ and $T_\pm$ denote, respectively, the determinant
and the trace of the matrix associated with $Z_0^\pm$. To ensure
that the corresponding Poincar\'e half-maps are well defined, we
assume
\begin{equation}
D_\pm>0,
\qquad
4D_\pm-(T_\pm)^2>0\nonumber
\end{equation}
in the $VV_1$ case, and
\begin{equation}
D_+>0,
\qquad
4D_+-(T_+)^2>0\nonumber
\end{equation}
in the $VI_3$ case. For sliding cycles from above, the condition
with the plus sign is required, whereas for sliding cycles from
below, the condition with the minus sign is required whenever it
applies.



In this PWL setting, the domains of the Poincar\'e half-maps $\Pi^{a,b}_0$ are half-open intervals contained in $[0,+\infty)$. For each $\Pi^{a,b}_0$, let $\mathcal J^{a,b}$ denote the maximal half-open interval contained in its domain such that $X^{sl}_0$ does not vanish on $[\Pi^{a,b}_0(x),x]$ for every $x\in\mathcal J^{a,b}$. We call $\mathcal J^{a,b}$ the \textit{maximal domain} of the slow divergence integral $I^{a,b}$. Notice that $\mathcal J^{a,b}$ may be empty. Whenever $\mathcal J^{a,b}\neq\emptyset$, Theorem \ref{theo-cycicity} can be applied to any compact interval $K$ contained in the interior of $\mathcal J^{a,b}$. For more details on $\Pi_{0}^{a,b}$ and the domains $\mathcal J^{a,b}$, see Section \ref{sec-int-charac}.

We introduce the following quantities:
\begin{equation}\label{eq-def-coefficients}
  \begin{array}{rclcrcl}
    \Delta & = &a_{2,1}^{+}b^{-} - a_{2,1}^{-}b^{+}, & \quad & C & = & a_{2,1}^{+}a_{1,1}^{-} - a_{2,1}^{-}a_{1,1}^{+}, \\
    & & & & & & \\
    \xi_{0}^{a} & = & a_{2,1}^{+}b^{+}\displaystyle\frac{C}{\sqrt{\Delta}} - T_{+}\sqrt{\Delta}, & \quad & \xi_{\infty}^{a} & = & D_{+}\displaystyle\frac{C^{2}}{\Delta}, \\
     & & & & & & \\
    \xi_{0}^{b} & = & - a_{2,1}^{-}b^{-}\displaystyle\frac{C}{\sqrt{\Delta}} - T_{-}\sqrt{\Delta}, & \quad & \xi_{\infty}^{b} & = & -D_{-}\displaystyle\frac{C^{2}}{\Delta}.
  \end{array}
\end{equation}

In Section \ref{sec-pwl-twofold} we explain why the constant $\Delta$ is always positive under our assumptions (see also Equation \eqref{eq-slidingvf-condition}), so the constants $\xi_{0,\infty}^{a,b}$ are always well defined.

\begin{mtheorem}\label{thm-1}
Let $c\in\{a,b\}$. Suppose that each $Z_{\lambda}^{\pm}$ is affine with respect to $(x,y)$ and $C^\infty$-smooth with respect to $\lambda\sim0$,
that $Z_0$ is given by \eqref{pwl-system-added}, that the origin is a
two-fold of type $VV_1$ or $VI_3$, and that Assumptions \ref{assump-reg1}-\ref{assump-versal} and the corresponding conditions above are satisfied. Consider the slow divergence integrals $I^{c}$ on their maximal domains
$\mathcal{J}^{c}$. Then the following holds: 

If $\xi_{0}^{c}\neq 0$ or $\xi_{\infty}^{c}\neq 0$, then $I^{c}$ has at most one zero counting multiplicity in the interior of $\mathcal{J}^{c}$. If $\xi_{0}^{c} = \xi_{\infty}^{c} = 0$, then $I^{c}$ is identically zero on $\mathcal{J}^{c}$.
\end{mtheorem}

The proof of Theorem \ref{thm-1} is given in Section \ref{sec-zeros-sdi}. It follows an idea suggested by an anonymous referee of \cite{HK24} and uses a case-independent approach based on the integral characterization of Poincar\'e half-maps developed in \cite{CFS2021}. The result for sliding cycles from below in the $VI_3$ case was proved in \cite[Theorem~3.1]{HK24}.


The following result is an immediate consequence of Theorems~\ref{theo-cycicity} and~\ref{thm-1}.
\begin{mtheorem}\label{mhtm-c}
Let $c\in\{a,b\}$. Under the assumptions of Theorem \ref{thm-1}, if $I^{c}$ has at most one zero counting multiplicity in the interior of $\mathcal{J}^{c}$ and $K$ is a compact
interval contained in the interior of $\mathcal{J}^{c}$, then
\[
\operatorname{Cycl}\left(\bigcup_{x\in K}\Gamma_x^{c}\right)\leq 2.
\]
\end{mtheorem}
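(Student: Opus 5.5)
The bound should follow by feeding the count from the hypothesis into Theorem \ref{theo-cycicity}, with the only real work being that the hypotheses of that theorem are met on $K$. Fix $c\in\{a,b\}$ and a compact interval $K$ in the interior of $\mathcal J^{c}$.

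First I check that $K$ is admissible for Theorem \ref{theo-cycicity}. Since $K\subset\operatorname{int}\mathcal J^{c}$, the half-map $\Pi^{c}_{0}$ is defined on $K$. By definition of the maximal domain, $X^{sl}_{0}$ does not vanish on $[\Pi^{c}_{0}(x),x]$ for $x\in K$. Hence $\det Z_0$ does not vanish on these segments, and the integrand of \eqref{eq-sdi-general} is smooth there. By compactness and continuity of $\Pi^c_0$, all the segments lie in a single compact interval on which the sliding vector field is regular. This is the setting in which the blow-up analysis of Appendix \ref{sec-blow-ups} applies. Also, in the PWL setting the domain of $\Pi^c_0$ lies in $[0,+\infty)$ and its range is on the stable side. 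So the range condition on $K$ (the role played by $\delta_1,\delta_2$ in Proposition \ref{prop-vv1}) holds, possibly after enlarging the sliding interval: for affine $Y^\pm_0$ the sliding region is a whole half-line on each side. Assumptions \ref{assump-reg1}--\ref{assump-versal} hold by hypothesis, and $Z_\lambda$ is smooth in $\lambda$. Thus Theorem \ref{theo-cycicity} applies to $\bigcup_{x\in K}\Gamma^c_x$.

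Next I split into cases according to the zeros of $I^{c}$ in $K$. If $I^c$ has no zero in $K$, item (1) of Theorem \ref{theo-cycicity} gives cyclicity $1\le 2$. Otherwise, by hypothesis, $I^c$ has at most one zero counting multiplicity in $\operatorname{int}\mathcal J^{c}\supset K$, so it has at most $l=1$ zero in $K$. Item (2) then gives $\operatorname{Cycl}\bigl(\bigcup_{x\in K}\Gamma^c_x\bigr)\le l+1=2$. The identically-zero alternative of Theorem \ref{thm-1} cannot occur: an identically vanishing function has infinitely many zeros, contradicting the hypothesis. So no degenerate case remains. The "analogous statements" clause of Theorem \ref{theo-cycicity} covers $c=b$.

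The only delicate step is the admissibility check in the first step, namely that the sliding region and the regularity of $X^{sl}_0$ persist along the whole segments $[\Pi^c_0(x),x]$ and not just near the origin. This is exactly what the definition of $\mathcal J^{c}$ is designed to guarantee, so I expect it to be routine.
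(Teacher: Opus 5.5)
Your proposal is correct and follows the paper's route: the paper presents this result as an immediate consequence of Theorem \ref{theo-cycicity}(2) with $l=1$, having noted in Section \ref{sec-main-cyclicity} that Theorem \ref{theo-cycicity} applies to any compact $K$ in the interior of $\mathcal J^{c}$, and your admissibility check spells out that remark. One small slip: nonvanishing of $X^{sl}_0$ does not imply that $\det Z_0$ is nonzero on $[\Pi^c_0(x),x]$, since in fact $\det Z_0(u)=u(\Delta+Cu)$ vanishes at $u=0$. Smoothness of the integrand instead follows from the identity $(Y^+_0-Y^-_0)^2/\det Z_0=(Y^+_0-Y^-_0)/X^{sl}_0$.
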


The periodic orbits of system $Z_{\varepsilon,\varepsilon \widetilde{\lambda}}^{\phi}$ in the case $I^{a,b} \equiv 0$ are not considered in the present paper. We leave this degenerate case for future work, since its analysis is expected to depend on the choice of regularization function $\phi$.

\begin{mtheorem} \label{mthm-existence-region}
Under the assumptions of Theorem \ref{thm-1}, the following statements hold.
\begin{enumerate}
\item In both the $VV_{1}$ and $VI_{3}$ cases, $I^{a}$ has exactly one zero counting multiplicity in the interior of its maximal domain $\mathcal{J}^a$ if and only if $CT_{+}<0$.
\item In the $VV_{1}$ case, $I^{b}$ has exactly one zero counting multiplicity in the interior of its maximal domain $\mathcal{J}^b$ if and only if $CT_{-}<0$.
\end{enumerate}
\end{mtheorem}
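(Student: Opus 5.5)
Theorem~\ref{thm-1} already bounds the number of zeros of $I^{c}$ by one, and that zero is simple when it exists. So it suffices to decide when $I^{c}$ actually changes sign on the interior of $\mathcal J^{c}$. The approach is to compute the boundary behaviour of $I^{c}$ at the two endpoints of $\mathcal J^{c}$ and compare signs. If the signs at the two ends are opposite, the intermediate value theorem together with Theorem~\ref{thm-1} gives exactly one zero, and it is simple. If the signs agree, an odd number of sign changes is impossible, and a single zero would have to be a double zero, which Theorem~\ref{thm-1} excludes. Hence there are no zeros.

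The first step is to use the PWL structure. On $y=0$ we have $Y^{\pm}_0=a^{\pm}_{2,1}s$ and $X^{\pm}_0=b^{\pm}+a^{\pm}_{1,1}s$. This gives
\[
\det Z_0(s)=s(\Delta+Cs), \qquad Y^+_0-Y^-_0=(a^+_{2,1}-a^-_{2,1})s,
\]
while $-Y^-_0/(Y^+_0-Y^-_0)$ is a constant. The integrand of \eqref{eq-sdi-general} is therefore a positive constant multiple of $s/(\Delta+Cs)$. Consequently
\[
I^{c}(x)=\kappa\int_{\Pi^c_0(x)}^{x}\frac{s}{\Delta+Cs}\,ds,\qquad \kappa>0,
\]
and the sign analysis no longer involves $\phi$.

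The second step is to expand this integral at the two ends of $\mathcal J^{c}$ using the integral characterization of $\Pi^{c}_0$ from \cite{CFS2021} (Section~\ref{sec-int-charac}). At the left end $x\to0^+$, the Poincar\'e half-map satisfies $\Pi^c_0(x)=-x+\alpha x^2+O(x^3)$. Here $\alpha$ is proportional to the trace term $T_{\pm}$ divided by the fold coefficient. Expanding $s/(\Delta+Cs)=s/\Delta-Cs^2/\Delta^2+\dots$ then shows that the leading term of $I^c$ is $x^3$ times a combination equal, up to a positive factor, to $\xi^c_0$. At the right end of $\mathcal J^{c}$ there are two possibilities. Either $x\to\infty$, with $\Pi^c_0(x)$ asymptotically linear and the integral dominated by a term proportional to $\xi^c_\infty$. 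Or the endpoint is determined by the singularity $-\Delta/C$ of the sliding vector field entering $[\Pi^c_0(x),x]$. In that case $I^c$ diverges logarithmically with sign $-\operatorname{sign}(C)$, or a sign fixed by the same data. Substituting the definitions \eqref{eq-def-coefficients}, the aim is to show that the two endpoint signs differ if and only if $CT_+<0$ for $c=a$, and if and only if $CT_-<0$ for $c=b$ in the $VV_1$ case.

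An alternative route, in the spirit of the proof of Theorem~\ref{thm-1}, is to read off the zero count from the auxiliary PWL system and its crossing limit cycle. Its existence criterion is a sign condition on traces and on the focus position, and would presumably reduce to the same inequality. I would use whichever expansion Section~\ref{sec-zeros-sdi} already provides.

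The main obstacle is the right endpoint. We must distinguish whether $\mathcal J^c$ is bounded, that is, whether $X^{sl}_0$ vanishes on $[\Pi^c_0(x),x]$ because $-\Delta/C$ lies in the relevant range, or unbounded. We must also verify in each subcase that the sign there combines with $\operatorname{sign}\xi^c_0$ to produce exactly the condition $CT_{\pm}<0$. This requires carefully using $\Delta>0$, the sign of $C$, and the focus conditions $4D_\pm-T_\pm^2>0$.
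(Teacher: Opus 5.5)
Your overall strategy is sound and is essentially how the paper proves sufficiency: show that $I^c$ has opposite signs at the two ends of $\mathcal J^c$, then invoke the bound of Theorem~\ref{thm-1}. However, your description of the left endpoint is wrong exactly where it matters. For a visible fold, the linear field has a focus or center in the same half-plane, and the orbit leaving $(x,0)$ turns almost a full revolution before returning. Only in the center case $T=0$ is the left endpoint $x=0$ with $\Pi(0)=0$; there in fact $\Pi(x)=-x$. For a repelling focus ($T>0$) the domain of $\Pi$ is $[0,+\infty)$ with $\Pi(0)=\hat x_1<0$. For an attracting focus ($T<0$) it is $[\hat x_0,+\infty)$ with $\hat x_0>0$ and $\Pi(\hat x_0)=0$. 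Your expansion $\Pi(x)=-x+\alpha x^2+\cdots$ with $\alpha\propto T$ is the invisible-fold expansion, which concerns $I^b$ in the $VI_3$ case and is not covered by this theorem. Since $CT<0$ forces $T\neq0$, your left-end analysis never applies in the case you need.

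The missing observation is that the left-end value needs no expansion. It is $\int_{\hat x_1}^{0}$ or $\int_{0}^{\hat x_0}$ of $u/(\Delta+Cu)$, whose integrand has one sign, so its sign is $-\operatorname{sgn}(T)$. Whenever $\mathcal J^c\neq\emptyset$, and in particular when $CT<0$, the point $x^*=-\Delta/C$ lies outside that interval. This sign does not depend on $C$, but $\operatorname{sgn}\xi^c_0$ does. Indeed, $\xi^c_0$ takes both signs in the region $C>0$, $T_\pm>0$ with $\mathcal J^c\neq\emptyset$, where there are no zeros. So bookkeeping based on $\xi^c_0$ cannot reproduce $CT_\pm<0$. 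Even in the center case, the leading term is $-\tfrac{2C}{3\Delta^2}x^3$, which is a \emph{negative} multiple of $\xi^c_0$.

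Your right-end dichotomy is also off: the branch ``$x\to\infty$ with a term proportional to $\xi^c_\infty$'' never occurs. If $C\neq0$, then $\mathcal J^c$ is bounded: either $x<x^*$ (when $C<0$), or $\Pi(x)>x^*$ with $\Pi(x)\to-\infty$ (when $C>0$). If $C=0$, then $\xi^c_\infty=0$. So for $C\neq0$ the right end is always the logarithmic blow-up of sign $-\operatorname{sgn}(C)$, which you did get right.

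With the corrected left end, sufficiency is exactly the paper's argument. The signs $-\operatorname{sgn}T$ and $-\operatorname{sgn}C$ differ, and Theorem~\ref{thm-1} gives uniqueness and simplicity because $\xi^c_\infty=\pm DC^2/\Delta\neq0$.

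For necessity, the paper does no endpoint analysis. A simple zero of $I^c$ is a hyperbolic crossing limit cycle of the auxiliary system \eqref{eq-new-pwl-proof} or \eqref{eq-new-pwl-proof-2}. Theorem~\ref{thm-crossing} then forces its traces $T$ and $C/\sqrt{\Delta}$ to have opposite signs. Your endpoint route can also give necessity, but only after checking each remaining case:
\begin{itemize}
\item If $CT>0$, both ends have sign $-\operatorname{sgn}T=-\operatorname{sgn}C$, or $\mathcal J^c=\emptyset$.
\item If $T=0\neq C$, then $\Pi(x)=-x$ and the integral equals $\int_0^x\frac{-2Cu^2}{\Delta^2-C^2u^2}\,du$, which has constant sign.
\item If $C=0\neq T$, the domain is unbounded, and the sign at infinity comes from the half-turn factor $e^{\pi T/\sqrt{4D-T^2}}$, again giving $-\operatorname{sgn}T$.
\item If $C=T=0$, then $I^c\equiv0$.
\end{itemize}

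Finally, your alternative route through an existence criterion for the auxiliary system does not give sufficiency. As the paper remarks, the auxiliary fields have zero determinant, so the criterion of \cite{CFSN2023ii} does not apply.
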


Sufficient conditions for the existence of simple zero of $I^{b}$ in the case $VI_{3}$ was addressed in \cite{HK24}.

Theorem \ref{mthm-existence-region} is proved in Section \ref{proof-existence region}. Suppose that one of the conditions in Theorem \ref{mthm-existence-region} holds. Then the corresponding slow divergence integral has a unique simple zero in the interior of $\mathcal{J}^{a,b}$. Hence, one may choose a compact interval $K$ contained in the interior of $\mathcal{J}^{a,b}$ such that the assumptions of Theorem \ref{theo-cycicity}(3) are satisfied. It follows that there exists a smooth function $\widetilde\lambda_c(\varepsilon)$, with $\widetilde\lambda_c(0)=0$, such that the regularized system $Z^\phi_{\varepsilon,\varepsilon\widetilde\lambda_c(\varepsilon)}$ has two hyperbolic limit cycles for all sufficiently small $\varepsilon>0$.

    \label{mhtm-existence-region} 


\section{Piecewise linear vector fields}\label{sec-pwl}

In this section, PWL vector fields are addressed. More precisely, in Equation \eqref{eq-pwl-smooth}, we assume that each $Z^{\pm}_{\lambda}$ is linear with respect to $(x,y)$ and $C^{\infty}$-smooth with respect to the parameter $\lambda$, with $\lambda \sim 0$. With these assumptions in mind, Equation \eqref{eq-pwl-smooth} for $\lambda = 0$ becomes
\begin{equation}\label{eq-pwl}
Z_{0}(x,y) = \left\{
  \begin{array}{rcrcl}
  Z^{+}_{0}(x,y) & = & (b_{1}^{+} + a_{1,1}^{+}x + a_{1,2}^{+}y, \ b_{2}^{+} + a_{2,1}^{+}x + a_{2,2}^{+}y), & \text{if} & y > 0,  \\
  Z^{-}_{0}(x,y) & = & (b_{1}^{-} + a_{1,1}^{-}x + a_{1,2}^{-}y, \ b_{2}^{-} + a_{2,1}^{-}x + a_{2,2}^{-}y), & \text{if} & y < 0,
  \end{array}
\right.    
\end{equation}
and the switching manifold is the straight line $\Sigma = \{y = 0\}$. One can also write the linear vector fields $Z^{\pm}_{0}$ in their matrix form by setting
\begin{equation}\label{eq-pwl-matrix}
A^{\pm} = \left(
  \begin{array}{cc}
    a_{1,1}^{\pm} & a_{1,2}^{\pm} \\
    a_{2,1}^{\pm} & a_{2,2}^{\pm} \\
  \end{array}
\right),
\qquad
B^{\pm} = \left(
  \begin{array}{c}
    b_{1}^{\pm}  \\
    b_{2}^{\pm} \\
  \end{array}
\right),
\end{equation}
and therefore $Z^{\pm}_{0}(x,y) = A^{\pm}(x,y)^{T} + B^{\pm}$.

\subsection{Useful results about PWL vector fields}\label{subsec:PWL vector fields}

Here, we state results that will be useful in our proofs in Section \ref{sec-zeros-sdi}. For this purpose, one must introduce some notation. The determinant and trace of $A^{\pm}$ will be denoted by $D_{\pm}$ and $T_{\pm}$, respectively. Following \cite{CFSN2023}, we further define the following constants:
\begin{equation}\label{eq-pwl-a+-}
\alpha_{-}:= a_{2,1}^{-}b_{1}^{-} - a_{1,1}^{-}b_{2}^{-}, \qquad  \alpha_{+}:= a_{2,1}^{+}b_{1}^{+} - a_{1,1}^{+}b_{2}^{+},
\end{equation}
\begin{equation}\label{eq-pwl-xi}
\xi_{0}:= \alpha_{+}T_{-} - \alpha_{-}T_{+}, \qquad \xi_{\infty}:= T_{-}^{2}D_{+} - T_{+}^{2}D_{-}, \qquad \beta:= a_{2,1}^{-}b_{2}^{+} - a_{2,1}^{+} b_{2}^{-}.
\end{equation}

Finally, we say that the PWL vector field \eqref{eq-pwl} \textit{satisfies hypothesis \eqref{eq-hyp-h}} if the following set of conditions holds:
\begin{equation}\label{eq-hyp-h}
\tag{H}
\begin{cases}
a_{2,1}^{-} a_{2,1}^{+} > 0;\\
\alpha_{-} \leq 0 \text{ and } 4D_{-} - T_{-}^{2} > 0,\ \text{or } \alpha_{-} > 0;\\
\alpha_{+} \geq 0 \text{ and } 4D_{+} - T_{+}^{2} > 0,\ \text{or } \alpha_{+} < 0.
\end{cases}    
\end{equation}

Geometrically, the first line of hypothesis \eqref{eq-hyp-h} is important for the existence of crossing periodic orbits. 
On the other hand, the second (resp. third) line is a necessary and sufficient condition such that the Poincar\'e half-map of $Z^{-}_{0}$ (resp. $Z^{+}_{0}$) with respect to the transversal section $\Sigma = \{y = 0\}$ exists (see \cite{CFSN2023}). 

\begin{remark}
The constants $\alpha_{\pm}, \xi_{0,\infty}$ and $\beta$ were defined in \cite{CFSN2023} and in that paper they have a slightly different expression than in Equations \eqref{eq-pwl-a+-} and \eqref{eq-pwl-xi}. This happens because in \cite{CFSN2023} the switching line is given by $\Sigma = \{x = 0\}$. Therefore, by performing the change of coordinates $(x,y)\mapsto (y,x)$, the switching line is $\Sigma = \{y = 0\}$ and then one precisely obtains $\alpha_{\pm}, \xi_{0,\infty}$ and $\beta$ as in Equations \eqref{eq-pwl-a+-} and \eqref{eq-pwl-xi}. The same holds for hypothesis \eqref{eq-hyp-h}. 
\end{remark}

Now we are able to state results that will play an important role in the proofs in Section \ref{sec-zeros-sdi}. The proofs of Theorems \ref{thm-annulus} and \ref{thm-crossing} (stated and proved in \cite{CFSN2023} and \cite{CFSN2023ii}, respectively) are based on the integral characterization of Poincar\'e half-maps given in \cite{CFS2021}.

\begin{theorem}[Theorem 1, \cite{CFSN2023}]\label{thm-annulus}
The PWL vector field \eqref{eq-pwl} has a crossing period annulus if, and only if, the hypothesis \eqref{eq-hyp-h} holds, $\operatorname{sgn}(T_{+}) = -\operatorname{sgn}(T_{-})$ and $\xi_{0} = \xi_{\infty} = \beta = 0$.
\end{theorem}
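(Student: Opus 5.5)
The plan is to reduce both half-maps to their integral characterizations from \cite{CFS2021} and compare them. Hypothesis \eqref{eq-hyp-h} enters first: its second and third lines are exactly the conditions under which the half-maps of $Z^{-}_{0}$ and $Z^{+}_{0}$ relative to $\Sigma=\{y=0\}$ exist on nonempty intervals. Its first line, $a_{2,1}^{-}a_{2,1}^{+}>0$, makes both flows turn in the same rotational sense. This is necessary for orbits of $Z^{+}_{0}$ and $Z^{-}_{0}$ to be concatenated into crossing cycles. So if an annulus exists, (H) must hold.

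Next I would bring each $Z^{\pm}_{0}$ to a Liénard-type normal form by an affine change that preserves $\Sigma$. In that form the half-map from the point $x_0$ to $x_1=\Pi^{\pm}(x_0)$ is implicitly determined by
\[
\mathrm{PV}\!\int_{x_1}^{x_0}\frac{s}{W_{\pm}(s)}\,ds = c_{\pm},
\]
where $W_{\pm}$ is a quadratic built from $D_{\pm}$, $T_{\pm}$, $\alpha_{\pm}$, and $c_{\pm}$ is a constant proportional to $T_{\pm}$. The shift between the two tangency points on $\Sigma$ is measured by $\beta$, so I would translate one of the integrals by the corresponding amount. A crossing period annulus is equivalent to $\Pi^{+}_{0}$ and $(\Pi^{-}_{0})^{-1}$ coinciding on an open interval. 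Since both maps are analytic, this is equivalent to the two implicit curves coinciding identically. That reduces the problem to an identity between two analytic families of integrals.

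For necessity, I would expand the displacement function $\delta(x_0)=\Pi^{+}_{0}(x_0)-(\Pi^{-}_{0})^{-1}(x_0)$ at the two ends of its domain: near the fold/tangency end, and as $x_0\to\infty$ (or near the endpoint when a half-map is bounded). The leading coefficients near the fold should be, in order, proportional to $\beta$, then to $T_{+}+T_{-}$-type combinations, then to $\xi_0$. At infinity they should be proportional to $\xi_\infty$ together with the sign information on $T_\pm$. Requiring $\delta\equiv0$ kills all of these coefficients, which should yield $\beta=\xi_0=\xi_\infty=0$ and $\operatorname{sgn}T_+=-\operatorname{sgn}T_-$. The sign condition arises because the constants $c_\pm$ must cancel: one half-map contracts while the other expands.

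For sufficiency, assume these conditions. I would show that the rescaling $s\mapsto\kappa s$, with $\kappa^2=D_{-}T_{+}^2/(D_{+}T_{-}^{2})$ made consistent by $\xi_\infty=0$, carries $s/W_+$ to a constant multiple of $s/W_-$, with $\xi_0=0$ matching the linear terms and $\beta=0$ aligning the tangencies. The two integral equations then become equivalent, so $\Pi^{+}_{0}=(\Pi^{-}_{0})^{-1}$ and every crossing orbit is closed. The main obstacle I expect is this last identification: checking that vanishing of finitely many coefficients forces the two rational integrands (and the principal-value constants) to agree exactly. It must be handled uniformly across the focus, node and saddle sub-cases allowed by (H), and this is where the case-free integral formulation of \cite{CFS2021} is essential.
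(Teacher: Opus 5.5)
The paper gives no proof of this statement. It quotes it from \cite{CFSN2023} and only notes that the proof there rests on the integral characterization of \cite{CFS2021}, so your framework is the intended one. As written, though, your argument cannot work, and the first problem is the statement itself. With $\xi_0=\alpha_+T_--\alpha_-T_+$ as defined here, $\xi_0$ is not invariant under rescaling time in one half-plane. Replacing $Z^-_0$ by $\sigma Z^-_0$, $\sigma>0$, changes no orbit and preserves \eqref{eq-hyp-h}, the trace condition and the vanishing of $\beta$ and $\xi_\infty$. But it turns $\xi_0$ into $\sigma\alpha_+T_--\sigma^2\alpha_-T_+$. For example, take $Z^+_0=(-1,\,x+y)$ and $Z^-_0=(2,\,2x-2y)$. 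Up to the factor $2$ below, this system is reversible under $(x,y,t)\mapsto(x,-y,-t)$, so it has a crossing period annulus around the origin. It satisfies \eqref{eq-hyp-h}, $\beta=\xi_\infty=0$ and $\operatorname{sgn}(T_+)=-\operatorname{sgn}(T_-)$, yet $\xi_0=-2$. Your own setup shows why. In the common coordinate $u$ on $\Sigma$, measured from the common tangency point, the denominator for each side is, up to a positive factor, $D_\pm u^2+\frac{\alpha_\pm}{a^\pm_{2,1}}T_\pm u+\frac{\alpha_\pm^2}{(a^\pm_{2,1})^2}$, because $\alpha_\pm/a^\pm_{2,1}$ is the tangential speed at the fold. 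So ``$\xi_0=0$ matches the linear terms'' holds only after normalizing $a^+_{2,1}=a^-_{2,1}$ (possible by rescaling time on one side), or with $\xi_0$ replaced by $a^-_{2,1}\alpha_+T_--a^+_{2,1}\alpha_-T_+$. Your rescaling $s\mapsto\kappa s$ cannot absorb this. Under $\xi_\infty=0$ your $\kappa^2=D_-T_+^2/(D_+T_-^2)$ equals $1$, and rescaling only one of two integrals written in the same coordinate changes the map it characterizes.

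Even after that normalization, the sufficiency step fails exactly where you expected trouble. Proportionality $W_+\propto W_-$ follows from $\xi_0=\xi_\infty=0$ only when the traces do not vanish. If $T_+=T_-=0$, both quantities vanish identically and say nothing about $D_\pm$ or $\alpha_\pm$. For instance, $Z^+_0=(-1-y,\,x)$, $Z^-_0=(1-2y,\,x)$ satisfies all hypotheses and has a period annulus, since both sides are Hamiltonian with half-map $x\mapsto-x$. Yet its denominators are $u^2+1$ and $2u^2+1$. This case needs its own argument: the $W_\pm$ are even, the integrands are odd and your constants $c_\pm\propto T_\pm$ vanish. So $x_1=-x_0$ solves both characterizations, and uniqueness gives $\Pi^+_0=(\Pi^-_0)^{-1}$. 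When $T_\pm\neq0$, you must still match the right-hand constants after $W_+=\mu W_-$. This is trivial for two invisible folds, and uses $\xi_\infty=0$ and the trace condition for two visible foci. For one visible and one invisible fold, $\xi_0=0$ gives $T_+$ and $T_-$ the same sign, so the trace condition forces $T_\pm=0$.

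Your necessity part is only a guess at coefficients, and parts of the guess are off. At an invisible two-fold with $\beta=0$, the first obstruction is already $\xi_0$. At a visible fold of a focus with $T\neq0$, the half-map does not send the fold to itself, so there is nothing to expand about. For saddles and nodes the half-map has bounded domain or range, so there is no expansion at infinity. A uniform substitute is to differentiate both characterizations along the common graph. This gives $W_+(x_0)W_-(x_1)=W_-(x_0)W_+(x_1)$, and after dividing by $x_0-x_1$ it is a symmetric bilinear relation. So either $W_+\propto W_-$, or the graph lies on a conic; the latter case contains the reversible one $x_1=-x_0$ and must be analysed separately.
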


\begin{theorem}[Theorem A and Proposition 10, \cite{CFSN2023ii}]\label{thm-crossing}
If the PWL vector field \eqref{eq-pwl} does not have sliding region, then it has at most one crossing limit cycle. This limit cycle, if it exists, is hyperbolic, $T_-T_+<0$ and $\xi_{0} \neq 0$. Moreover, it is asymptotically stable (resp. unstable) provided $\xi_{0} < 0$ (resp. $\xi_{0} > 0$).
\end{theorem}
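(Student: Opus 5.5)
The plan is to work entirely with the integral characterization of Poincar\'e half-maps from \cite{CFS2021}, in the coordinates of \eqref{eq-pwl} with $\Sigma=\{y=0\}$. First I normalize. The absence of a sliding region means the affine functions $b_2^{\pm}+a_{2,1}^{\pm}x$ never have opposite signs on $\Sigma$ away from tangencies. Since any crossing cycle must use both half-planes, this forces $a_{2,1}^-a_{2,1}^+>0$ and a common tangency point. A translation in $x$ then places this point at the origin, so $b_2^{\pm}=0$ and $\beta=0$. Crossing cycles surround the origin and meet $\Sigma$ at points $x_0>0$ and $x_1<0$, which lie in the domains of the half-maps. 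Here I use the second and third lines of \eqref{eq-hyp-h} together with the first line, so that hypothesis \eqref{eq-hyp-h} holds whenever a crossing cycle exists.

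Next I encode the half-maps. By \cite{CFS2021}, $x_1=\Pi^{\pm}(x_0)$ is characterized implicitly by
\[
\mathrm{PV}\!\int_{x_1}^{x_0}\frac{-s}{W_\pm(s)}\,ds=c_\pm\,\pi ,
\]
where $W_\pm(s)=D_\pm s^2-T_\pm\alpha_\pm s+\alpha_\pm^2$ (up to the sign conventions induced by $(x,y)\mapsto(y,x)$), and $c_\pm$ is an explicit constant depending on $T_\pm,D_\pm$. Differentiating gives the derivative formula
\[
\frac{d\Pi^\pm}{dx_0}=\frac{x_0\,W_\pm(x_1)}{x_1\,W_\pm(x_0)} .
\]
A crossing periodic orbit corresponds to a common solution $(x_0,x_1)$ of the $+$ and $-$ equations. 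Equivalently, it is a zero of the displacement function $\delta(x_0)=\Pi^-(x_0)-\Pi^+(x_0)$, where $\Pi^-$ is inverted appropriately so that both maps go from the positive to the negative semi-axis.

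The key step, and the one I expect to be hardest, is to compute the sign of $\delta'$ at an arbitrary zero. At a zero, the derivative formula gives
\[
\delta'(x_0)=\frac{x_0}{x_1}\left(\frac{W_-(x_1)}{W_-(x_0)}-\frac{W_+(x_1)}{W_+(x_0)}\right).
\]
Its sign is that of $W_-(x_1)W_+(x_0)-W_+(x_1)W_-(x_0)$. Since $W_\pm$ are quadratics sharing the structure above, this cross-difference factors as $(x_0-x_1)$ times a bilinear expression in $(x_0,x_1)$. Its coefficients are combinations of $\alpha_\pm T_\mp$ and $D_\pm\alpha_\mp^2$.

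The main difficulty is that this expression is not sign-definite by itself. I will use the two integral equations simultaneously to eliminate the dependence on $(x_0,x_1)$. The first attempt is to subtract suitably weighted versions of the two equations, producing an auxiliary function $F(x_0,x_1)$ whose gradient is controlled. If that fails, I will use the Lum--Chua-type argument of \cite{CFSN2021}: parametrize by $\mu=x_1/x_0$ and show that the cross-difference equals $\xi_0$ times a positive factor plus a term forced to vanish by the cycle condition. The expected conclusion is that at every zero $\operatorname{sgn}\delta'(x_0)=\operatorname{sgn}\xi_0$, up to a fixed convention.

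Once the sign is fixed, the conclusions follow directly. If $\xi_0\neq0$, every zero of $\delta$ is simple with the same crossing direction, so there is at most one zero; hence there is at most one crossing limit cycle, it is hyperbolic, and it is stable for $\xi_0<0$ and unstable for $\xi_0>0$. If $\xi_0=0$ and a cycle exists, $\delta$ would vanish to higher order there. Analyticity of $\delta$ then forces either $\delta\equiv0$, which is a period annulus by Theorem~\ref{thm-annulus} and so not a limit cycle, or a contradiction; hence a limit cycle forces $\xi_0\neq0$. Finally, $T_-T_+<0$ follows from a Bendixson--Dulac-type argument. The divergence integral over the region bounded by the cycle is $T_+\,\mathrm{Area}_++T_-\,\mathrm{Area}_-$, and it must vanish for a closed orbit of a piecewise vector field with crossing only, by Green's theorem applied on each side using $\beta=0$. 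If $T_+,T_-$ have the same sign and are not both zero, this integral is nonzero. If $T_+=T_-=0$, both pieces are centers, so any crossing cycle belongs to a period annulus and is not a limit cycle.
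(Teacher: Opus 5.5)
\textbf{There is a genuine gap at the central step.} The paper does not prove this statement; it quotes it from \cite{CFSN2023ii}. Your argument therefore has to stand alone, and it stops at the step that carries all the content. You never show that $\operatorname{sgn}\delta'$ is the same at every zero of $\delta$; you only list strategies for it.

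Worse, with $\xi_0$ exactly as in \eqref{eq-pwl-xi} and no normalization of $a_{2,1}^\pm$, that step is false, and so are the sign claims of the statement. Apply $(x,y)\mapsto(x,\mu y)$, $\mu>0$, only in $\{y>0\}$. This fixes $\Sigma$ pointwise and conjugates the flows, so the return map on $\Sigma$ is unchanged, and with it every crossing cycle and its stability. Yet $(a_{2,1}^+,\alpha_+)$ becomes $(\mu a_{2,1}^+,\mu\alpha_+)$, while $T_\pm$, $D_\pm$, $\alpha_-$ stay fixed.

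A concrete case: take $Z^+=(-1-\tfrac{37}{4}y,\,x+y)$ and $Z^-=(1-\tfrac{17}{16}y,\,x-\tfrac12 y)$, an invisible--invisible two-fold with foci $\tfrac12\pm3i$ and $-\tfrac14\pm i$.
\begin{itemize}
\item The return map is $x+\tfrac13x^2+O(x^3)$ near $0$ and $e^{-\pi/12}x+O(1)$ near infinity, so there is an asymptotically stable crossing limit cycle.
\item Here $\xi_0=-\tfrac12$.
\item After the rescaling with $\mu=4$ the same cycle has $\xi_0=1$; with $\mu=2$ it has $\xi_0=0$.
\end{itemize}

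The source of the problem is your $W_\pm$, which is the Li\'enard-normalized polynomial. In the coordinates of \eqref{eq-pwl} with $b_2^\pm=0$, the polynomial for which $\frac{d\Pi}{dx_0}=\frac{x_0W(x_1)}{x_1W(x_0)}$ holds is, up to a positive factor, $W_\pm(s)=(a_{2,1}^\pm)^2D_\pm s^2+a_{2,1}^\pm T_\pm\alpha_\pm s+\alpha_\pm^2$. As a check, on $\overline{Z}^{\pm}$ it reproduces $\Delta+Cs$. Call $Q$ the factor multiplying $(x_0-x_1)$ in your cross-difference. Its constant term is then $Q(0,0)=\alpha_+\alpha_-\bigl(a_{2,1}^+\alpha_-T_+-a_{2,1}^-\alpha_+T_-\bigr)$, which is a multiple of $\xi_0$ only after normalizing $a_{2,1}^+=a_{2,1}^-$.

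\textbf{Even with the correct weights, the decisive step is still missing.} Your first route is not specified. The second cannot work as described: $\xi_0$ enters $Q$ only through $Q(0,0)$, and the cycle equations are transcendental (logarithms and arctangents). They give no algebraic control over the terms in $x_0+x_1$ and $x_0x_1$ at an isolated cycle.

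What is needed is a global argument in the $(x_0,x_1)$-plane. For example, set $G_-(x_0,x_1)=\mathrm{PV}\!\int_{x_1}^{x_0}\frac{-s\,ds}{W_-(s)}$ and $h(x_0)=G_-(x_0,\Pi^+(x_0))-c_-\pi$. Then $h$ vanishes exactly at crossing cycles. Also $h'$ vanishes exactly where the graph of $\Pi^+$, an integral curve of $(x_1W_+(x_0),x_0W_+(x_1))$, meets the conic $\{Q=0\}$. By Rolle you must then control these meetings. You must also fix the crossing direction from the behaviour of the half-maps at the ends of their domains. When both graphs start at the origin, their relative position there has the sign of $Q(0,0)$.

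\textbf{Two further steps fail as written.}
\begin{itemize}
\item For $\xi_0=0$, analyticity does not rule out an isolated zero of $\delta$ of multiplicity at least two, i.e.\ a semi-stable cycle. So no contradiction follows.
\item In the Green argument, the fluxes through $\Sigma$ are $-\tfrac{a_{2,1}^+}{2}(x_0^2-x_1^2)$ and $\tfrac{a_{2,1}^-}{2}(x_0^2-x_1^2)$. Hence $T_+\mathrm{Area}_++T_-\mathrm{Area}_-$ need not vanish. What you actually get is $T_+\mathrm{Area}_+/a_{2,1}^+=-T_-\mathrm{Area}_-/a_{2,1}^-$. This still yields $T_+T_-<0$ or $T_+=T_-=0$. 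In the latter case it forces both half-maps to be $x\mapsto-x$, which gives the period annulus; the reason is not that both pieces are centers, since trace-zero pieces need not be centers.
\end{itemize}
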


\subsection{Two-folds and sliding vector field}\label{sec-pwl-twofold}

In this paper, the goal is to study canard cycles of regularized piecewise linear vector fields near two-folds of type $VV_{1}$ and $VI_{3}$ (following the terminology of \cite{HK23, KRG03}). We assume that for $\lambda = 0$ the PWL vector field has the form \eqref{eq-pwl} and that the two-fold (either $VV_{1}$ or $VI_{3}$) is positioned at the origin. We further assume that the attracting sliding region is contained in the negative branch of the $x$-axis, whereas the repelling sliding region is contained in the positive branch of the $x$-axis. In this framework, near the two-fold the sliding vector field $Z^{sl}_{0}$ points from the left to the right in a neighborhood of the origin.

Using Proposition \ref{prop-vv1}, straightforward computations lead to the following conditions on the coefficients of the PWL vector field \eqref{eq-pwl} for $\lambda = 0$:
    \begin{equation}\label{eq-conditions-vv1}
    \text{If $0\in\Sigma$ is $VV_{1}$, then} \qquad b_{1}^{+}, a_{2,1}^{+} >  0, \qquad b_{2}^{+} = b_{2}^{-} = 0, \qquad a_{2,1}^{-} < 0 \text{ and } b_{1}^{-} > 0.  
    \end{equation}
    \begin{equation}\label{eq-conditions-vi3}
    \text{If $0\in\Sigma$ is $VI_{3}$, then} \qquad b_{1}^{+}, a_{2,1}^{+} >  0, \qquad b_{2}^{+} = b_{2}^{-} = 0, \qquad a_{2,1}^{-} < 0 \text{ and } b_{1}^{-} < 0.   
    \end{equation}

Therefore, due to conditions \eqref{eq-conditions-vv1} and \eqref{eq-conditions-vi3}, from now on the PWL vector field \eqref{eq-pwl} (for $\lambda = 0$) will be simply written as
\begin{equation}\label{eq-pwl-nf}
Z_{0}(x,y) = \left\{
  \begin{array}{rcrcl}
  Z^{+}_{0}(x,y) & = & (b^{+} + a_{1,1}^{+}x + a_{1,2}^{+}y, \ a_{2,1}^{+}x + a_{2,2}^{+}y), & \text{if} & y > 0,  \\
  Z^{-}_{0}(x,y) & = & (b^{-} + a_{1,1}^{-}x + a_{1,2}^{-}y, \ a_{2,1}^{-}x + a_{2,2}^{-}y), & \text{if} & y < 0,
  \end{array}
\right.    
\end{equation}
with $b^{+} > 0$, $a_{2,1}^{+} > 0$ and $a_{2,1}^{-} < 0$. Moreover, if $0\in\Sigma$ is $VV_{1}$ then $b^{-} > 0$, and if $0\in\Sigma$ is $VI_{3}$ then $b^{-} < 0$. One further denotes $Z^{\pm}_{0} = (X^{\pm}_{0},Y^{\pm}_{0})$. In addition, the matrix $B^{\pm}$ in Equation \eqref{eq-pwl-matrix} is simply written as
\begin{equation*}
B^{\pm} = \left(
  \begin{array}{c}
    b^{\pm}  \\
    0\\
  \end{array}
\right).
\end{equation*}

The equilibrium point $p^{\pm}_{0}$ of $Z^{\pm}_{0}$ (when it exists) is given by
\begin{equation*}
p^{\pm}_{0} = \left(-\displaystyle\frac{b^{\pm}a^{\pm}_{2,2}}{D_{\pm}}, \displaystyle\frac{b^{\pm}a^{\pm}_{2,1}}{D_{\pm}}\right),    
\end{equation*}
where $D_{\pm}$ is the determinant of the matrix $A^{\pm}$ given in \eqref{eq-pwl-matrix}. In the framework of $VV_{1}$ and $VI_{3}$ two-folds, in order to assure that a return mechanism exists (that is, the Poincar\'e half-maps $\Pi^{a,b}_{0}$ are well defined), together with Equations \eqref{eq-conditions-vv1} and \eqref{eq-conditions-vi3} one should further require the following conditions on the coefficients of the PWL vector field \eqref{eq-pwl} for $\lambda = 0$:
    \begin{equation}\label{eq-return-vv1}
    \text{If $0\in\Sigma$ is $VV_{1}$, then} \qquad D_{\pm} >  0, \text{ and } 4D_{\pm} - (T_{\pm})^{2} > 0.  
    \end{equation}
    \begin{equation}\label{eq-return-vi3}
    \text{If $0\in\Sigma$ is $VI_{3}$, then} \qquad D_{+} >  0, \text{ and } 4D_{+} - (T_{+})^{2} > 0.   
    \end{equation}

In Section \ref{sec-int-charac} we discuss the domain of the Poincar\'e half-maps $\Pi^{a,b}_{0}$.

Finally, observe that the sliding vector field $Z^{sl}_{0}:\Sigma^{sl}_{0}\rightarrow T\Sigma$ associated to \eqref{eq-pwl-nf} is given by $Z^{sl}_{0}(x,y) = (X^{sl}_{0}(x),0)$, with
\begin{equation}\label{eq-pwl-nf-slidingvf}
X^{sl}_{0}(x) = \displaystyle\frac{(a_{2,1}^{+}b^{-} - a_{2,1}^{-}b^{+}) + (a_{2,1}^{+}a_{1,1}^{-} - a_{2,1}^{-}a_{1,1}^{+})x}{a_{2,1}^{+} - a_{2,1}^{-}} = \displaystyle\frac{\Delta + Cx}{a_{2,1}^{+} - a_{2,1}^{-}},
\end{equation}
where $\Delta$ and $C$ were defined in Equation \eqref{eq-def-coefficients}.

For the sake of simplicity, we refer to $X^{sl}_{0}$ given in Equation \eqref{eq-pwl-nf-slidingvf} as the sliding vector field. Observe that the expression of \eqref{eq-pwl-nf-slidingvf} does not depend on whether $0\in\Sigma$ is $VV_{1}$ or $VI_{3}$. Moreover, it is always true that $(a_{2,1}^{+} - a_{2,1}^{-}) > 0$ due to conditions \eqref{eq-conditions-vv1} and \eqref{eq-conditions-vi3}. Since the sliding vector field, in a neighborhood of the origin, points from the attracting branch to the repelling branch of the sliding region $\Sigma^{sl}_{0}$, then
\begin{equation}\label{eq-slidingvf-condition}
\Delta = (a_{2,1}^{+}b^{-} - a_{2,1}^{-}b^{+}) > 0.
\end{equation}

Condition \eqref{eq-slidingvf-condition} is always true if the origin is $VV_{1}$ or $VI_3$. 
Finally, the sliding vector field has a singularity $x^{*}\in\Sigma$ if and only if $a_{2,1}^{+}a_{1,1}^{-} \neq a_{2,1}^{-}a_{1,1}^{+}$ (or, equivalently, $C\neq 0$). If such singularity exists, then it is hyperbolic and 
\begin{equation}\label{singularity-simple}
x^{*} = - \displaystyle\frac{a_{2,1}^{+}b^{-} - a_{2,1}^{-}b^{+}}{a_{2,1}^{+}a_{1,1}^{-} - a_{2,1}^{-}a_{1,1}^{+}} = -\displaystyle\frac{\Delta}{C}.
\end{equation}

The slow divergence integral \eqref{eq-sdi-general} in the PWL case becomes
\begin{equation}\label{eq-sdi-reg}
I^{a,b}(x) = (a_{2,1}^{+} - a_{2,1}^{-})\phi'\left(\phi^{-1}\left(\displaystyle\frac{-a_{2,1}^{-}}{a_{2,1}^{+} - a_{2,1}^{-}}\right)\right)\displaystyle\int_{\Pi^{a,b}_{0}(x)}^{x}\frac{udu}{X^{sl}_{0}(u)}. 
\end{equation}

\begin{remark}
In the PWL case, the factor involving $\phi$ in \eqref{eq-sdi-general} is a positive constant and can therefore be taken outside the integral. Hence, the zeros of $I^a$ and $I^b$ do not depend on the choice of the regularization function $\phi$. Outside the PWL setting, the situation is different. As shown in \cite{HK23}, we can produce arbitrarily many limit cycles in the $VI_3$ case from below by varying $\phi$, even when $Z^-_\lambda$ is linear and $Z^+_\lambda$ is quadratic.
\end{remark}

\subsection{Domains of the Poincar\'e half-maps and slow divergence integrals}\label{sec-int-charac} 

In this section we study the domains of $\Pi^{a,b}_0$ and $I^{a,b}$, given in \eqref{eq-sdi-reg}, and recall the integral characterization used in the proof of Theorem \ref{thm-1}. The domains of the Poincar\'e half-maps $\Pi^{a,b}_0$ are half-open intervals contained in $[0,+\infty)$.

Recall that, whenever the half-map is associated with a visible fold, conditions \eqref{eq-return-vv1}--\eqref{eq-return-vi3} give $D_\pm>0$ and $4D_\pm-(T_\pm)^2>0$. Thus, the corresponding linear vector field has a center or a focus in the half-plane containing the fold. This applies to both half-maps in the $VV_1$ case and to $\Pi^a_0$ in the $VI_3$ case. If the singularity is a center, the domain is $[0,+\infty)$ and the half-map takes values in $(-\infty,0]$. If it is an attracting focus, there exists $\hat x_0>0$ such that $\Pi^{a,b}_0(\hat x_0)=0$; the domain is $[\hat x_0,+\infty)$ and the half-map takes values in $(-\infty,0]$. If it is a repelling focus, there exists $\hat x_1<0$ such that $\Pi^{a,b}_0(0)=\hat x_1$; the domain is $[0,+\infty)$ and the half-map takes values in $(-\infty,\hat x_1]$ (see \cite{CFS2021}).

In the $VI_3$ case, $\Pi^b_0$ is associated with an invisible fold; its domain has the form $[0,b)$ and its range has the form $(\ell,0]$, where $b\in(0,+\infty]$ and $\ell\in[-\infty,0)$. We refer to \cite{CFS2021} and \cite{HK24} for further details.

We now recall the integral characterization used in Section~\ref{sec-zeros-sdi} for linear systems with an invisible fold. Suppose that, after a linear change of coordinates and, if necessary, a reversal of time, the system is written as $Z(x,y)=(Dy+\hat b,-x+Ty)$. Under the hypotheses of \cite[Theorem~8]{CFS2021}, for every $x$ in the domain of the corresponding forward or backward Poincar\'e half-map, $\Pi(x)$ is the unique point on the opposite side of the origin satisfying
\[
\int_{\Pi(x)}^x\frac{-u\,du}{Du^2+\hat bTu+\hat b^2}=0.
\]
The integral is taken in the component on which the denominator does not vanish. This characterization will be applied to the auxiliary systems introduced in the proof of Theorem~\ref{thm-1}.

We finally describe the maximal domains $\mathcal J^{a,b}$ of $I^{a,b}$. If $X^{{sl}}_0$ has no singularities, then $\mathcal J^{a,b}$ coincides with the domain of $\Pi^{a,b}_0$. Otherwise, let $x^*$ be its unique singularity, given by \eqref{singularity-simple}. If $x^*>0$, then $\mathcal J^{a,b}$ consists of the points $x$ in the domain of $\Pi^{a,b}_0$ satisfying $x<x^*$. If $x^*<0$, it consists of the points satisfying $\Pi^{a,b}_0(x)>x^*$. Consequently, $\mathcal J^{a,b}$ is empty only for an attracting focus with $0<x^*\leq\hat x_0$, or for a repelling focus with $\hat x_1\leq x^*<0$. In the center case and for $\Pi^b_0$ in the $VI_3$ case, $\mathcal J^{a,b}$ is always nonempty.

\section{Number of zeros of the slow divergence integral}\label{sec-zeros-sdi}

This section is devoted to study zeros of the slow divergence integral \eqref{eq-sdi-reg} for both cases $VV_{1}$ and $VI_{3}$. For this purpose, we will use the integral characterization of Poincar\'e half-maps and Theorems \ref{thm-annulus} and \ref{thm-crossing} presented in Section \ref{sec-pwl}. 

In Equation \eqref{eq-sdi-reg}, $\Pi^{a,b}_{0}$ denotes the Poincar\'e half-map of $Z^{\pm}_{0}$ for $\lambda = 0$, and therefore simple zeros of $I^{a}$ assure the existence of hyperbolic limit cycles Hausdorff close to sliding cycles from ``above'', whereas simple zeros of $I^{b}$ assure the existence of hyperbolic limit cycles Hausdorff close to sliding cycles from ``below''. It is important to remark that the expression of $I^{a,b}$ does not depend on whether $0\in\Sigma$ is $VV_{1}$ or $VI_{3}$. 

Since the constants
\begin{equation*}
 (a_{2,1}^{+} - a_{2,1}^{-}), \qquad \phi'\left(\phi^{-1}\left(\displaystyle\frac{-a_{2,1}^{-}}{a_{2,1}^{+} - a_{2,1}^{-}}\right)\right)
\end{equation*}
are always positive, then $\tilde{x}$ is zero of $I^{a,b}(x)$ if, and only if $\tilde{x}$ is zero of
\begin{equation}\label{eq-alt-sdi}
\widetilde{I}^{a,b}(x) = \displaystyle\int_{\Pi^{a,b}_{0}(x)}^{x}\frac{-udu}{(a_{2,1}^{+}b^{-} - a_{2,1}^{-}b^{+}) + (a_{2,1}^{+}a_{1,1}^{-} - a_{2,1}^{-}a_{1,1}^{+})u} .    
\end{equation}

Firstly, zeros of the integral $\widetilde{I}^{b}$ given in \eqref{eq-alt-sdi} will be considered.

\subsection{Proof of Theorem \ref{thm-1}}\label{sec-proof-thm-1}

In this first part of the proof, we deal with the integral $I^{b}$ as given in Equation \eqref{eq-sdi-reg}. In this part of the proof, the fold of $Z^{-}_{0}$ can be visible or invisible, that is, the proof does not depend whether $0\in\Sigma$ is $VV_{1}$ or $VI_{3}$. Recall the constants $\Delta$, $C$ and $\xi_{0,\infty}^{a,b}$ defined in Equation \eqref{eq-def-coefficients}. Firstly, consider the linear vector field
\begin{equation}\label{eq-auxiliary-vf}
\overline{Z}^{+}(x,y) = \left(\sqrt{\Delta}, \quad -x + \left(\displaystyle\frac{C}{\sqrt{\Delta}}\right)y    \right),
\end{equation}
and define the PWL vector field
\begin{equation}\label{eq-new-pwl-proof}
\overline{Z}(x,y) = \left\{
  \begin{array}{llc}
  \overline{Z}^{+}(x,y), & \text{if} & y > 0,  \\
  Z^{-}_{0}(x,y), & \text{if} & y < 0,
  \end{array}
\right.    
\end{equation}
where $\overline{Z}^{+}$ and $Z^{-}_{0}$ are given as in Equations \eqref{eq-auxiliary-vf} and \eqref{eq-pwl-nf}, respectively.


For the PWL vector field $\overline Z$ in
\eqref{eq-new-pwl-proof}, the coefficients of $x$ in the second
components of $Z_0^-$ and $\overline Z^+$ are $a_{2,1}^-$ and
$-1$, respectively. Since $a_{2,1}^-<0$, their product is positive.
Hence, $\Sigma$ contains no sliding region, and the nonsliding
hypothesis of Theorem~\ref{thm-crossing} is satisfied. Moreover, applying formulas \eqref{eq-pwl-a+-} in \eqref{eq-new-pwl-proof}, we obtain
\begin{equation*}
\alpha_{-} = a_{2,1}^-b^-,
\qquad
\alpha_{+} = - \sqrt{\Delta} < 0.    
\end{equation*}

If the two-fold is of type $VV_1$, then $\alpha_{-} < 0$ and
$4D_{-} - T_{-}^{2} > 0$, whereas in the $VI_3$ case we have
$\alpha_{-} > 0$. Thus, the second condition in \eqref{eq-hyp-h} holds in both cases, while the third one follows from $\alpha_{+} < 0$. Therefore, $\overline Z$ satisfies hypothesis \eqref{eq-hyp-h}, and both Poincar\'e half-maps are well defined.


Applying formulas \eqref{eq-pwl-xi} in $\overline{Z}$ given in \eqref{eq-new-pwl-proof}, it can be checked that $\beta=0$ and 
\begin{equation*}
\begin{array}{rclcl}
\xi_{0}^b & = & - \displaystyle\frac{a_{2,1}^{-}b^{-}(a_{2,1}^{+}a_{1,1}^{-}-a_{2,1}^{-}a_{1,1}^{+})}{\sqrt{a_{2,1}^{+}b^{-}-a_{2,1}^{-}b^{+}}}-T_{-}\sqrt{a_{2,1}^{+}b^{-}-a_{2,1}^{-}b^{+}} & = & - a_{2,1}^{-}b^{-}\displaystyle\frac{C}{\sqrt{\Delta}} - T_{-}\sqrt{\Delta}, \\
\xi_{\infty}^{b} & = & - \displaystyle\frac{(a_{2,1}^{+}a_{1,1}^{-}-a_{2,1}^{-}a_{1,1}^{+})^{2}D_{-}}{a_{2,1}^{+}b^{-}-a_{2,1}^{-}b^{+}} & = & -D_{-}\displaystyle\frac{C^{2}}{\Delta},
\end{array}
\end{equation*}
where $D_{-}$ and $T_{-}$ are respectively the determinant and the trace of the matrix $A^{-}$ associated to the linear vector field $Z^{-}_{0}$, and then we obtain the coefficients in Equation \eqref{eq-def-coefficients}.

We now show that the trace condition in
Theorem \ref{thm-annulus} is automatically satisfied whenever
$\xi_0^b=\xi_\infty^b=0$. If $D_{-} \neq 0$, then
$\xi_{\infty}^{b} = 0$ implies $C=0$, and hence $\overline{T}_{+} = \frac{C}{\sqrt{\Delta}} = 0$, where $\overline{T}_{+}$ is the trace of the matrix associated with $\overline{Z}^{+}$. The equality $\xi_0^b=0$ then gives $T_-=0$. Therefore, $\operatorname{sgn}(\overline T_+)=-\operatorname{sgn}(T_-)$. On the other hand, if $D_-=0$, then we are necessarily in the $VI_3$ case, since
$D_->0$ in the $VV_1$ case (see \eqref{eq-return-vv1}). The equality $\xi_0^b=0$ gives
\begin{equation*}
T_-=-\frac{a_{2,1}^-b^-}{\sqrt{\Delta}}\,\overline T_+.    
\end{equation*}

In the $VI_3$ case, $a_{2,1}^-<0$ and $b^-<0$, so
$a_{2,1}^-b^->0$. It follows again that
$\operatorname{sgn}(\overline T_+)=-\operatorname{sgn}(T_-)$,
where $\operatorname{sgn}(0)=0$.

Consequently, the conclusions of
Theorems \ref{thm-annulus} and \ref{thm-crossing} reduce, for the
PWL vector field $\overline Z$, to the following two cases. If
$\xi_0^b=\xi_\infty^b=0$, then the trace condition required in
Theorem \ref{thm-annulus} is automatically satisfied, and
$\overline Z$ has a crossing period annulus. If
$\xi_0^b\neq0$ or $\xi_\infty^b\neq0$, then
Theorem \ref{thm-crossing} implies that $\overline Z$ has at most
one crossing limit cycle, which is hyperbolic whenever it exists.


Following Section \ref{sec-int-charac}, the backward Poincar\'e half-map $\overline{\Pi}^{a}$ of $\overline{Z}^{+}$ with respect to the transversal section $\Sigma = \{y = 0\}$ is then given by its integral characterization (see \cite[Theorem 8]{CFS2021})
\begin{equation}\label{eq-int-charac}
\displaystyle\int_{\overline{\Pi}^{a}(x)}^{x}\frac{-udu}{\Delta + Cu} = 0.    
\end{equation}

Assume first that $\xi_0^b\neq0$ or $\xi_\infty^b\neq0$. By the
integral characterization \eqref{eq-int-charac}, the slow divergence
integral $I^b$ in \eqref{eq-sdi-reg}, or equivalently
$\widetilde I^b$ in \eqref{eq-alt-sdi}, vanishes at $x$ if and only
if the sewing PWL vector field $\overline Z$ in
\eqref{eq-new-pwl-proof} has a crossing periodic orbit through
$(x,0)$. Theorem~\ref{thm-crossing} implies that $\overline Z$ has
at most one crossing limit cycle, which is hyperbolic whenever it
exists. Consequently, $I^b$ has at most one zero counting multiplicity. 





Assume now that $\xi_0^b=\xi_\infty^b=0$. As shown above, the trace
condition
$\operatorname{sgn}(\overline T_+)=-\operatorname{sgn}(T_-)$
is then automatically satisfied. Theorem~\ref{thm-annulus} therefore
implies that $\overline Z$ has a crossing period annulus. By
\eqref{eq-int-charac}, $\widetilde I^b$, and hence also $I^b$,
vanishes identically.


Now we proceed to the second part of the proof, which concerns the integral $I^{a}$ as given in Equation \eqref{eq-sdi-reg}. This proof is similar to the proof of the first part, and we will only highlight the steps for the sake of completeness. Consider the linear vector field
\begin{equation}\label{eq-auxiliary-vf-2}
\overline{Z}^{-}(x,y) = \left(\sqrt{\Delta}, \quad x + \left(\displaystyle\frac{C}{\sqrt{\Delta}}\right)y    \right),
\end{equation}
and define the PWL vector field
\begin{equation}\label{eq-new-pwl-proof-2}
\overline{Z}(x,y) = \left\{
  \begin{array}{llc}
  Z^{+}_{0}(x,y), & \text{if} & y > 0,  \\
  \overline{Z}^{-}(x,y), & \text{if} & y < 0,
  \end{array}
\right.    
\end{equation}
where $Z^{+}_{0}$ and $\overline{Z}^{-}$ are given as in Equations \eqref{eq-pwl-nf} and \eqref{eq-auxiliary-vf-2}, respectively. As in the first part of the proof, direct computations show that the PWL vector field $\overline Z$ defined in \eqref{eq-new-pwl-proof-2} satisfies hypothesis \eqref{eq-hyp-h} and is nonsliding. Indeed, along $y=0$, the normal components are $a_{2,1}^{+}x$ and $x$, which have the same sign for $x\neq0$. Moreover, applying formulas \eqref{eq-pwl-a+-} in \eqref{eq-new-pwl-proof-2}, we obtain $\alpha_{-} = \sqrt{\Delta} > 0$, whereas $\alpha_{+} = a_{2,1}^{+}b^{+}>0$ and $4D_{+}-T_{+}^{2}>0$, since $Z_{0}^{+}$ has a monodromic singularity. Hence, the corresponding Poincaré half-maps are well defined.

Concerning the PWL vector field $\overline{Z}$ given in \eqref{eq-new-pwl-proof-2}, applying formulas \eqref{eq-pwl-xi} it can be checked that $\beta=0$ and
\begin{equation*}
\begin{array}{rclcl}
\xi_{0}^a & = & \displaystyle\frac{a_{2,1}^{+}b^{+}(a_{2,1}^{+}a_{1,1}^{-}-a_{2,1}^{-}a_{1,1}^{+})}{\sqrt{a_{2,1}^{+}b^{-}-a_{2,1}^{-}b^{+}}} -T_{+}\sqrt{a_{2,1}^{+}b^{-}-a_{2,1}^{-}b^{+}} & = & a_{2,1}^{+}b^{+}\displaystyle\frac{C}{\sqrt{\Delta}} - T_{+}\sqrt{\Delta}, \\
\xi_{\infty}^a & = & \displaystyle\frac{(a_{2,1}^{+}a_{1,1}^{-}-a_{2,1}^{-}a_{1,1}^{+})^{2}D_{+}}{a_{2,1}^{+}b^{-}-a_{2,1}^{-}b^{+}} & = & D_{+}\displaystyle\frac{C^{2}}{\Delta},
\end{array}
\end{equation*}
where $D_{+}$ and $T_{+}$ are respectively the determinant and the trace of the matrix $A^{+}$ associated to the linear vector field $Z^{+}$. We obtained the coefficients defined in \eqref{eq-def-coefficients} and the rest of the proof follows exactly as in the proof of the first part.

\subsection{Proof of Theorem \ref{mthm-existence-region}}\label{proof-existence region}
We prove both statements simultaneously. For simplicity, let $\Pi$ denote the Poincar\'e half-map associated with the vector field containing the visible fold, and let $T$ and $D$ denote the trace and determinant of its matrix, respectively. Thus, $(\Pi,T,D)=(\Pi_{0}^{a},T_{+},D_{+})$ for sliding cycles from above, and $(\Pi,T,D)=(\Pi_{0}^{b},T_{-},D_{-})$ for sliding cycles from below in the $VV_{1}$ case.  In both cases, $D>0$ and $4D-T^{2}>0$.

Assume that $CT<0$. Then both $C$ and $T$ are nonzero, and the equilibrium point of the vector field containing the visible fold is a focus. Moreover, it lies in the half-plane containing the visible fold and the domain of the corresponding Poincar\'e half-map is described in Section \ref{sec-int-charac}. The focus is repelling when $T>0$ and attracting when $T<0$. It is enough to study the integral
\begin{equation*}
\widetilde I(x)=\int_{\Pi(x)}^{x}\frac{u}{\Delta + Cu}\,du,    
\end{equation*}
which has the same zeros as the corresponding slow divergence integral. We distinguish the two possibilities allowed by $CT < 0$.

Suppose first that $C<0<T$. The focus is repelling, and hence there exists $\hat{x}_{1}<0$ such that $\Pi(0)=\hat{x}_{1}$, and the domain of $\Pi$ is $[0,+\infty)$ and the range of $\Pi$ is $(-\infty,\hat{x}_{1}]$. We have $x^{*} = -\frac{\Delta}{C} > 0$ (see \eqref{singularity-simple}). The maximal domain of $\widetilde I$ is $[0,x^{*})$, and
\begin{equation*}
\widetilde I(0)=\int_{\hat{x}_{1}}^{0}\frac{u}{\Delta+Cu}\,du < 0.
\end{equation*}

A primitive of the integrand of $\widetilde{I}$ is
\begin{equation*}
F(u)=\frac{u}{C}-\frac{\Delta}{C^{2}}\log(\Delta+Cu).
\end{equation*}

Since $\Pi(x)$ remains finite as $x\to(x^{*})^{-}$, whereas $\Delta + Cx\to 0^{+}$, we obtain
\begin{equation*}
\lim_{x\to(x^{*})^{-}} \widetilde I(x)
= \lim_{x\to(x^{*})^{-}}\left(F(x)-F\left(\Pi(x)\right)\right)
= +\infty.
\end{equation*}

Therefore, $\widetilde I$, and hence the corresponding slow divergence integral, has at least one zero in $(0,x^{*})$. 

Suppose now that $T<0<C$. The focus is attracting, and hence there exists $\hat{x}_{0}>0$ such that $\Pi(\hat{x}_{0})=0$, and the domain and range of $\Pi$ are given by $[\hat{x}_{0},+\infty)$ and $(-\infty,0]$, respectively. In this case, $x^{*}= - \frac{\Delta}{C}< 0$. Let $\overline{x} := \Pi^{-1}(x^{*})>\hat{x}_{0}$. The maximal domain of $\widetilde I$ is $[\hat{x}_{0},\overline{x})$, and
\begin{equation*}
\widetilde I(\hat{x}_{0})
=\int_{0}^{\hat{x}_{0}}\frac{u}{\Delta+Cu}\,du > 0.
\end{equation*}

As $x\to\overline{x}^{-}$, we have $\Pi(x)\to(x^{*})^{+}$, and therefore
\begin{equation*}
\lim_{x\to\overline{x}^{-}}\widetilde I(x)
=\lim_{x\to\overline{x}^{-}}\bigl(F(x)-F(\Pi(x))\bigr)
=-\infty.
\end{equation*}

Thus, the corresponding slow divergence integral again has at least one zero in the interior of its maximal domain.

By hypothesis, $C\neq0$ and $\Delta,D_{\pm}>0$, which implies $\xi_{\infty}^{a} = \frac{C^{2}D_{+}}{\Delta} \neq 0$ for sliding cycles from above, whereas $\xi_{\infty}^{b} = -\frac{C^{2}D_{-}}{\Delta} \neq 0$ for sliding cycles from below. By Theorem \ref{thm-1}, the corresponding slow divergence integral has at most one zero counting multiplicity. Consequently, the zero found above is unique and simple.

It remains to prove necessity. Suppose that the corresponding slow
divergence integral has a simple zero. As shown in the proof of
Theorem \ref{thm-1}, this zero corresponds to a crossing limit cycle of the
associated sewing PWL system (see \eqref{eq-new-pwl-proof} for $I^{b}$ and \eqref{eq-new-pwl-proof-2} for $I^{a}$). By Theorem \ref{thm-crossing}, the
traces of the two linear vector fields defining this sewing system must
have opposite signs. Since these traces are $T$ and $\frac{C}{\sqrt{\Delta}}$, respectively, and $\Delta>0$, we obtain
\begin{equation*}
\frac{CT}{\sqrt{\Delta}} < 0,    
\end{equation*}
which implies $CT<0$. This concludes the proof.

\begin{remark}
We remark that we cannot directly apply \cite[Corollary 2]{CFSN2023ii} to either \eqref{eq-new-pwl-proof} or \eqref{eq-new-pwl-proof-2} in order to prove that the condition $CT_{\pm} < 0$ in Theorem \ref{mthm-existence-region} is sufficient, because such PWL systems do not satisfy the hypotheses of \cite[Proposition 9]{CFSN2023ii}. Indeed, in both Equations \eqref{eq-new-pwl-proof} and \eqref{eq-new-pwl-proof-2} the determinant of the matrix associated to $\overline{Z}^{\pm}$ is identically zero. 
\end{remark}

\section{Cyclicity of sliding cycles in the presence of singularities of the sliding vector field}\label{sec-cycl-sing}

So far, we assumed that the sliding vector field $X^{sl}_{0}$ does not have singularities in a compact segment $[\delta_{1}, \delta_{2}]$ with $\delta_{1} < 0 < \delta_{2}$ and, in addition, it holds $X^{sl}_{0}(x,0) > 0$, for all $x\in[\delta_{1},\delta_{2}]$ (see Proposition \ref{prop-vv1}). In this section we discuss the cyclicity of sliding cycles in the case where either $\delta_{1}$ or $\delta_{2}$ is a singularity of $X^{sl}_{0}$.

More precisely, we consider the PWL vector field $Z_{0}$ in Equation \eqref{eq-pwl-nf} where $b^{+} > 0$, $a_{2,1}^{+} > 0$ and $a_{2,1}^{-} < 0$. When $0\in\Sigma$ is $VV_{1}$, then $b^{-} > 0$, and if $0\in\Sigma$ is $VI_{3}$, then $b^{-} < 0$. The associated sliding vector field is given by \eqref{eq-pwl-nf-slidingvf} and condition \eqref{eq-slidingvf-condition} is fulfilled. Recall from Section~\ref{sec-pwl-twofold} that the sliding vector field has a singularity if and only if
$a_{2,1}^{+}a_{1,1}^{-} \neq a_{2,1}^{-}a_{1,1}^{+}$,
in which case the singularity is denoted by $x^{*}$ and given by \eqref{singularity-simple}. In this section, we assume that $a_{2,1}^{+}a_{1,1}^{-} \neq a_{2,1}^{-}a_{1,1}^{+}$ (or, equivalently, $C\neq 0$).

If $x^{*}>0$, so that $x^{*}$ lies in the repelling sliding region, and 
$\Gamma^{a,b}_{x^{*}}$ is well defined, we consider this sliding cycle. On the other hand, if $x^{*} < 0$, so that $x^{*}$ lies in the attracting sliding region, assume that $x^{*}$ is in the range of $\Pi_0^{a,b}$ 
and consider $\Gamma^{a,b}_{(\Pi^{a,b}_{0})^{-1}(x^*)}$. In both cases, the corresponding sliding cycle has the singularity $x^*$ as its corner point, see Fig. \ref{fig:centerandfocus}.

\begin{mtheorem}\label{mthm-singularity}
Consider the regularization $Z_{\varepsilon,\varepsilon \widetilde{\lambda}}^{\phi}$ of a PWL vector field $Z_\lambda$, assuming that $Z_0$ is given by \eqref{eq-pwl-nf} and satisfies the properties stated above. Suppose further that Assumptions \ref{assump-reg1}-\ref{assump-reg3} hold.  Then
\begin{equation*}
 \operatorname{Cycl}\left(\Gamma^{a,b}_{x^{*}}\right) \leq 1 \text{ for } x^{*}>0, \qquad \text{ and } \qquad \operatorname{Cycl}\left(\Gamma^{a,b}_{(\Pi^{a,b}_{0})^{-1}(x^{*})}\right) \leq 1 \text { for } x^{*} < 0.   
\end{equation*}
Moreover, if such limit cycle exists, then it is hyperbolic and attracting (resp. repelling) if $x^{*} < 0$ (resp. $x^{*} > 0$).
\end{mtheorem}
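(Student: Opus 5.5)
We would prove Theorem \ref{mthm-singularity} with the same blow-up framework used for Theorem \ref{theo-cycicity} in Appendix \ref{sec-blow-ups}. The difference is that the corner point is now a hyperbolic singularity of the slow flow, so the slow divergence integral no longer controls the return map. We would therefore estimate the full divergence integral of the regularized system directly.

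\emph{Slow-fast setup.} Write $Z^{\phi}_{\varepsilon,\varepsilon\widetilde\lambda}$ in the scaling $y=\varepsilon^{2}\hat y$; as in \cite{BST06}, this gives a slow-fast system whose critical manifold $\hat y=\phi^{-1}\bigl(-a^{-}_{2,1}x/((a^{+}_{2,1}-a^{-}_{2,1})x)\bigr)$ is a constant graph over the sliding segment. Its slow flow is $X^{sl}_{0}$, and the equilibrium $x^{*}$ is hyperbolic with linearization $C/(a_{2,1}^{+}-a_{2,1}^{-})$. The normal eigenvalue is $(a^{+}_{2,1}-a^{-}_{2,1})\phi'(\cdot)\,x$ up to a positive factor, hence nonzero at $x^{*}\neq0$. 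Thus $x^{*}$ is a hyperbolic saddle (repelling branch, $x^{*}>0$) or a node (attracting branch, $x^{*}<0$) of the full system for $\varepsilon>0$, possibly after the $\widetilde\lambda$-shift.

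\emph{Transition map near $x^{*}$.} We take sections transverse to the regular orbit of $Z^{\pm}_{0}$ entering and leaving the corner point. The Poincaré map is then the composition of a regular transition, the passage near the two-fold (handled as in Theorem \ref{theo-cycicity} by the cylindrical blow-ups), the slow drift along the normally hyperbolic branch away from $x^{*}$, and the passage near the hyperbolic point $x^{*}$. For $x^{*}>0$ the orbit must leave the repelling branch exactly at the equilibrium. There we would use smooth normal forms (the saddle is nonresonant generically, with $C^{k}$ linearization) and obtain Dulac-type maps whose derivative equals $\exp$ of the divergence integrated along the orbit, to leading order. The goal is to show that $\log P'$ is of the form $\varepsilon^{-1}(\text{definite-sign term})$ or $\log(\text{time}) \times$ (sign-definite factor) uniformly in the neighbourhood. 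The divergence on the repelling side accumulates without a compensating contraction because the orbit lingers near $x^{*}$ for a time that diverges as it approaches the stable manifold. This yields $P'-1\neq0$, with $P'>1$ for $x^{*}>0$ and $P'<1$ for $x^{*}<0$. Rolle's theorem then gives at most one fixed point, which is hyperbolic and attracting or repelling as claimed.

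\emph{Main obstacle.} The delicate step is the uniform divergence estimate through the corner. Near $x^{*}$ the slow-divergence contribution $\int u\,du/(\Delta+Cu)$ diverges logarithmically, so $I^{a,b}$ is not defined. We would split the divergence integral into a part near $x^{*}$ and the remainder. The remainder is bounded, $O(1/\varepsilon)$ times a finite SDI on a truncated segment. The part near $x^{*}$ is governed by the ratio of eigenvalues at the saddle or node, and we would show it dominates with a fixed sign for all small $\varepsilon$ and $\widetilde\lambda$. We would first try the approach of \cite{HHPY26}, already done there for $\Gamma^{b}$ in $VI_{3}$, adapting the chart computations to the other three configurations, which by Appendix \ref{sec-blow-ups} have the same blown-up geometry.
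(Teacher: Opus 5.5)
Your overall route matches the paper's. You use the two cylindrical blow-ups to place the problem in the framework of \cite{DD08}, and you replace the undefined slow divergence integral with the full divergence integral of $Z^{\phi}_{\varepsilon,\varepsilon\widetilde\lambda}$, whose sign is fixed by $x^*$, following \cite{HHPY26}. However, the local picture at $x^*$ that you describe is wrong in one case, and the tool you propose there is not uniform in $\varepsilon$.

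\emph{The singular point is a saddle in both cases, never a node.} Since $\Delta>0$, $x^*<0$ forces $C>0$, so $(X^{sl}_0)'(x^*)=C/(a^+_{2,1}-a^-_{2,1})>0$. Thus the slow flow is repelling at $x^*$ while the branch is normally attracting. More generally, the determinant of the linearization of \eqref{eq-psvf-reg-appendix} at this point is, to leading order, $\varepsilon^2$ times the normal eigenvalue $(a^+_{2,1}-a^-_{2,1})\phi'(y_{2c})\,x^*$ times $(X^{sl}_0)'(x^*)$. This product is negative for either sign of $x^*$. For $x^*<0$, the incoming regular orbit is the fast stable fibre of this saddle, and the cycle leaves along the slow unstable manifold towards the two-fold. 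A node would capture nearby orbits, and there would be no return map to analyse. You do not need a separate analysis anyway: $(t,x)\mapsto(-t,-x)$ preserves the $VV_1$/$VI_3$ conditions, the PWL form and the regularization. It interchanges attracting and repelling, and it maps the case $x^*<0$ onto the case $x^*>0$.

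\emph{The local passage needs control uniform in $(\varepsilon,\widetilde\lambda)$.} Your plan is a $C^k$ linearization for each fixed $\varepsilon$ under ``generic'' nonresonance. This does not give that control. The eigenvalue ratio at the saddle is $O(\varepsilon^2)$ and tends to $0$, so the domains and derivative bounds of such linearizations degenerate as $\varepsilon\to0$. Also, the theorem is stated for all such PWL systems, so no genericity can be invoked. The paper obtains the uniform control by checking, after the primary and secondary blow-ups, the hypotheses T0--T5 of \cite{DD08} (T6 and \eqref{eq-regularity} are not needed), and then following \cite{HHPY26}.

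Once that is in place, your heuristic is the right one, with one correction: the dominant term is $\varepsilon^{-2}$ times the slow divergence integral, not $\varepsilon^{-1}$, because the order of degeneracy is $2$. That integral runs up to the point where the orbit leaves the critical manifold (for $x^*>0$) or lands on it (for $x^*<0$). It tends to $+\infty$ or $-\infty$, respectively, as that point approaches $x^*$. The time spent near the saddle adds a contribution of the same sign, because the trace there is dominated by the fast eigenvalue. It suffices to evaluate this integral along periodic orbits: each one is then hyperbolic with the same stability, so there is at most one.
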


Theorem \ref{mthm-singularity} was proved in \cite[Theorem~4.1]{HHPY26} for the sliding cycles ``from below'' $\Gamma^{b}_{x^{*}}$ and $\Gamma^{b}_{(\Pi^{b}_{0})^{-1}(x^{*})}$ in the $VI_3$ case, that is, in the half-plane containing the invisible fold. As in the case of Theorem~\ref{theo-cycicity}, Appendix \ref{sec-blow-ups} treats the remaining cases: sliding cycles from above and from below in the $VV_1$ case, and sliding cycles from above in the $VI_3$ case. The proof is similar to that of \cite[Theorem 4.1]{HHPY26}.

For Theorem \ref{mthm-singularity}, condition \eqref{eq-regularity} is not required. We stress that the proof of Theorem \ref{mthm-singularity} cannot be based on the slow divergence integral \eqref{eq-sdi-reg}, since this integral is not well defined near $x=x^*$ or near $x=(\Pi^{a,b}_{0})^{-1}(x^*)$. For this reason, one has to use instead the full divergence integral of $Z_{\varepsilon,\varepsilon \widetilde{\lambda}}^{\phi}$ introduced in \cite{HHPY26}. Its sign is determined by the singularity at $x=x^*$: the integral is negative when $x^*<0$, and positive when $x^*>0$.


Fig. \ref{fig:centerandfocus} deals with sliding cycles in the presence of singularity of $X^{sl}_{0}$ in the $VV_{1}$ case. This framework is possible if the PWL vector field \eqref{eq-pwl-nf} satisfies conditions \eqref{eq-conditions-vv1} and \eqref{eq-return-vv1}. As discussed before, the singularity $x^{*}$ of $X^{sl}_{0}$ exists if, and only if $a_{2,1}^{+}a_{1,1}^{-} \neq a_{2,1}^{-}a_{1,1}^{+}$ (or $C\neq 0$). When all of those conditions are satisfied, $x^{*}$ is not positioned in the origin and then the sliding cycles $\Gamma^{a,b}_{x^{*}}$ or $\Gamma^{a,b}_{(\Pi^{a,b}_{0})^{-1}(x^{*})}$ are possible. 

\begin{figure}[htb]
	\begin{center}
	\includegraphics[width=12cm]{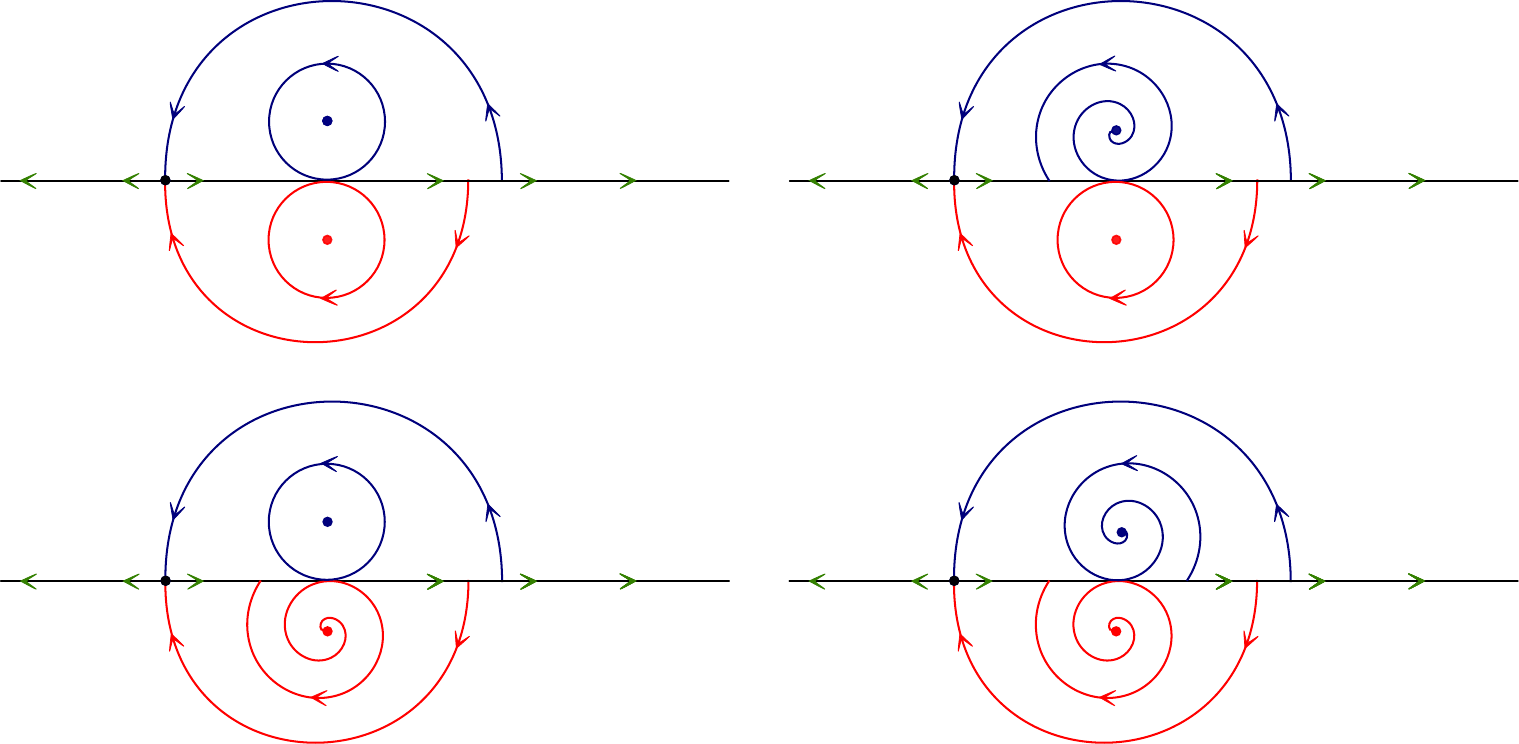}
       {\footnotesize 
       \put(-138,27){$x^{*}$}
       \put(-138,118){$x^{*}$}
       \put(-315,27){$x^{*}$}
       \put(-315,118){$x^{*}$}
       \put(-233,110){$\Gamma^{b}_{(\Pi^{b}_{0})^{-1}(x^{*})}$}
        \put(-230,155){$\Gamma^{a}_{(\Pi^{a}_{0})^{-1}(x^{*})}$}
        \put(-233,25){$\Gamma^{b}_{(\Pi^{b}_{0})^{-1}(x^{*})}$}
        \put(-230,60){$\Gamma^{a}_{(\Pi^{a}_{0})^{-1}(x^{*})}$}
     \put(-58,110){$\Gamma^{b}_{(\Pi^{b}_{0})^{-1}(x^{*})}$}
        \put(-55,155){$\Gamma^{a}_{(\Pi^{a}_{0})^{-1}(x^{*})}$}
        \put(-58,25){$\Gamma^{b}_{(\Pi^{b}_{0})^{-1}(x^{*})}$}
        \put(-55,60){$\Gamma^{a}_{(\Pi^{a}_{0})^{-1}(x^{*})}$}}
         \end{center}
    \caption{Examples of sliding cycles $\Gamma^{a,b}_{(\Pi^{a,b}_{0})^{-1}(x^{*})}$ in the presence of the singularity $x^{*}$ as considered in Theorem \ref{mthm-singularity}. In all four cases, the two-fold $VV_{1}$ is sketched and $x^{*} = \delta_{1} < 0$.}
    \label{fig:centerandfocus}
\end{figure}

\section*{Declarations}
\textbf{Conflict of interest} The authors declare that they have no conflict of interest.

\textbf{Data Availability Statement} Data sharing not applicable to this article as no datasets were generated or analyzed during the current study.

\section*{Acknowledgments and funding}

R. Huzak is supported by Croatian Science Foundation (HRZZ) grant IP-2022-10-9820. K. U. Kristiansen is supported by Danish Research Council (DFF) Grant 4283-00014B. O. H. Perez is supported by Sao Paulo Research Foundation (FAPESP) grant 2021/10198-9. A. L. M. T. da Silva and R. Huzak are supported by the Special Research Fund (BOF number: BOF25BL05) of Hasselt University. 

\appendix

\section{Proofs of Theorems \ref{theo-cycicity} and \ref{mthm-singularity}}\label{sec-blow-ups}

Throughout Appendix \ref{sec-blow-ups}, the two-fold is assumed to be type $VV_1$ or $VI_3$. For Theorem \ref{theo-cycicity} we assume Assumptions \ref{assump-reg1}-\ref{assump-versal}, whereas for Theorem \ref{mthm-singularity} only \ref{assump-reg1}-\ref{assump-reg3} are needed.

The proof is divided into several steps. First, we perform a suitable cylindrical blow-up, which we call the primary blow-up, in the regularized system $Z_{\varepsilon,\varepsilon \widetilde{\lambda}}^{\phi}$ augmented by the trivial equation $\dot\varepsilon=0$. In this way, the discontinuity line $\{y=0\}$ is replaced by a half-cylinder. Near the canard trajectories lying on the top of this cylinder, we verify assumptions T0--T6 of \cite{DD05} in the case of Theorem~\ref{theo-cycicity}, and the assumptions of \cite{DD08} in the case of Theorem~\ref{mthm-singularity}. A secondary cylindrical blow-up plays an important role in this verification. Once these assumptions are satisfied, one can study the transition maps and the structure of the difference map of $Z_{\varepsilon,\varepsilon \widetilde{\lambda}}^{\phi}$ near the sliding cycles. One then relates the zeros of this difference map to the zeros of the slow divergence integral \eqref{eq-sdi-general} in the case of Theorem~\ref{theo-cycicity}, and to the zeros of the full divergence integral of $Z_{\varepsilon,\varepsilon \widetilde{\lambda}}^{\phi}$ in the case of Theorem~\ref{mthm-singularity}. The analysis of the difference map follows the same arguments as in \cite{HK23,HHPY26}, and is therefore omitted for the sake of readability.

\subsection{Primary cylindrical blow-up}
Recall from Section~\ref{sec-main-cyclicity} that $Z_{\varepsilon,\varepsilon \widetilde{\lambda}}^{\phi}$ denotes the vector field \eqref{eq-psvf-reg} with $h(x,y)=y$, after the rescaling
\begin{equation*}
    \lambda = \varepsilon \widetilde{\lambda}, \qquad \widetilde{\lambda} \sim 0.
\end{equation*} The parameter $\widetilde{\lambda}$ will be called \textit{regular breaking parameter}. In order to study the dynamics of $Z_{\varepsilon,\varepsilon \widetilde{\lambda}}^{\phi}$ near $\Sigma = \{y = 0\}$, we consider the extended vector field $Z_{\varepsilon,\varepsilon \widetilde{\lambda}}^{\phi} + 0\frac{\partial}{\partial \varepsilon}$ and perform the cylindrical blow up
\begin{equation}\label{(3.2)}
\begin{array}{rccc}
\Phi: & \mathcal{M} & \longrightarrow &\mathbb{R}^{3} \\
& (x,\overline{y},\overline{\varepsilon},r) & \longmapsto & (x,r^{2}\overline{y}, r\overline {\varepsilon})
= (x,y,\varepsilon),
\end{array}
\end{equation}
with $\mathcal{M}$ being a manifold with corners,
$(\overline{y},\overline{\varepsilon})\in \mathbb{S}^{1}$,
$x\in\mathbb{R}$ and $\overline{\varepsilon}, r\geq0$.


Let $\overline{G} := \Phi^{*}
\left( Z_{\varepsilon,\varepsilon \widetilde{\lambda}}^{\phi} + 0\frac{\partial}{\partial \varepsilon} \right)$ be the vector field on $\mathcal{M}$, which is the pullback of $Z_{\varepsilon,\varepsilon \widetilde{\lambda}}^{\phi} + 0\frac{\partial}{\partial \varepsilon}$ under \eqref{(3.2)}. We will specifically study $\widehat{G}\colon = r^{2}\overline{G}$. To analyze the dynamics of $\widehat{G}$ in a neighborhood of the cylinder, we consider different charts.

\begin{remark} From now on, we adopt the notation $Z^{\pm}_{\varepsilon \widetilde{\lambda}}(x,y) := Z^{\pm}(x,y,\varepsilon \widetilde{\lambda})$, $X^{\pm}_{\varepsilon \widetilde{\lambda}}(x,y) := X^{\pm}(x,y,\varepsilon \widetilde{\lambda})$ and $Y^{\pm}_{\varepsilon \widetilde{\lambda}}(x,y) := Y^{\pm}(x,y,\varepsilon \widetilde{\lambda})$.
\end{remark}

\subsubsection{Dynamics in the chart $\overline{\varepsilon} = 1$} 

Using the chart-specific coordinate $y_2$ defined by $y=\varepsilon^2y_2$, with $(x,y_2)$ kept in a large compact subset of $\mathbb{R}^2, \varepsilon > 0$ and $\varepsilon \sim 0$, we obtain 
\begin{equation}\label{eq-psvf-reg-appendix}
\begin{array}{rcl}
		\dot{x} & = & \varepsilon^2\left(
X^{+}(x,\varepsilon^2y_2,\varepsilon\widetilde{\lambda})\phi(y_2) +X^{-}(x,\varepsilon^{2}y_{2},\varepsilon\widetilde{\lambda})(1 -\phi(y_2))\right),\\
\dot{y_2} & = & Y^{+}(x,\varepsilon^2y_2,\varepsilon\widetilde{\lambda})\phi(y_2)+ Y^{-}(x,\varepsilon^2y_2,\varepsilon\widetilde{\lambda})(1-\phi(y_2)).
\end{array}
\end{equation}

Setting $\varepsilon=0$ in system \eqref{eq-psvf-reg-appendix}, one obtains
\begin{equation}\label{(3.4)}
		\begin{array}{rcl}
			\dot{x} & = & 0,\\
			\dot{y_2} & = & Y^{+}(x,0,0)\phi(y_2)+ Y^{-}(x,0,0)(1-\phi(y_2)).
		\end{array}
\end{equation}

The critical set of \eqref{(3.4)} is given by the union of critical manifolds
\begin{equation*}
	H := \{(0, y_2); \ y_2\in\mathbb{R}\}, \quad C := \left\{(x, y_{2}); \  y_2 =\phi^{-1}\left( \frac{-Y^{-}}{Y^{+}-Y^{-}}
(x,0,0)\right)\right\}.
\end{equation*}

Note that an intersection of ${H}$ and ${C}$ occurs at the point
\begin{equation*}
	p_0 =\left(0,\phi^{-1}\left( \frac{-\frac{\partial Y^{-}}{\partial x}}{\frac{\partial Y^{+}}{\partial x}-\frac{\partial Y^{-}}{\partial x}}
	(0,0,0)\right)\right).
\end{equation*}

All singularities on $H$ are nilpotent except for $p_0$, whose linearization is identically zero.

In this subsection we show that the slow-fast system \eqref{eq-psvf-reg-appendix} satisfies Conditions T0–T2 of \cite{DD05} along the critical curve $C$. The singularities on $C$ are normally attracting for $x<0$ and normally repelling for $x>0$. Moreover, by Proposition~\ref{prop-vv1}, the slow dynamics $X^{sl}(x)$ given in \eqref{eq-sliding-x} is regular and points from the attracting part of $C$ to its repelling part; see Fig.~\ref{FirstBlowup}. Therefore, denoting the vector field in \eqref{eq-psvf-reg-appendix} by $\widehat{G}_S$, and letting $M_x$ be any local $C^n$ center manifold of $\widehat{G}^1_S:=\widehat{G}_S+0\frac{\partial}{\partial \varepsilon}$ at a normally hyperbolic singularity $x\in C$, the rescaled vector field $\frac{1}{\varepsilon^2}\widehat{G}_S^1|_{M_x}$ defines a local flow box containing $C$ and oriented from left to right. (We call the exponent $2$ in the term $\varepsilon^2$ the order of degeneracy.) This implies that Assumptions T0–T2 of \cite{DD05} are satisfied. 

Notice that the assumptions of Theorem~\ref{mthm-singularity} imply that $X^{sl}(x)$ has a singularity away from $x=0$, which puts us in the framework of \cite{DD08}.

\subsubsection{Dynamics in the phase directional charts $\overline{y}=\pm 1$}

In the chart $\overline{y}=-1$ associated with \eqref{(3.2)}, we use the  coordinates $(r_1,\varepsilon_1)$ given by $(y,\varepsilon) =(-r^2_1, r_1\varepsilon_1)$. After multiplication by $r_1^{2} > 0$, we obtain
\begin{equation}\label{(3.5)}
		\begin{array}{rcl}
			\dot{x} & = & r^2_1\left(
			X^{+}(x,-r_1^2,r_1\varepsilon_1\widetilde{\lambda})\phi_-(\varepsilon_1^2)+X^{-}(x,-r_1^2,r_1\varepsilon_1\widetilde{\lambda})(1-\phi_-(\varepsilon_{1}^{2}))\right),\\
			\dot{r_1} & = & -\frac{r_1}{2}\left(Y^{+}(x,-r_1^2,r_1\varepsilon\widetilde{\lambda})\phi_-(\varepsilon_1^2)+ Y^{-}(x,-r_1^2,r_1\varepsilon\widetilde{\lambda})(1-\phi_-(\varepsilon_1^2))\right),\\
			\dot{\varepsilon_1} & = & \frac{\varepsilon_1}{2}\left(Y^{+}(x,-r_1^2,r_1\varepsilon\widetilde{\lambda})\phi_-(\varepsilon_1^2)+ Y^{-}(x,-r_1^2,r_1\varepsilon\widetilde{\lambda})(1-\phi_-(\varepsilon_1^2))\right),	
		\end{array}
\end{equation}
with $\phi_-$ defined in Assumption \ref{assump-reg3}.

The line $r_1 = \varepsilon_1 = 0$ consists of singularities of \eqref{(3.5)} whose eigenvalues are given by $\left(0,-\frac{Y^-(x,0,0)}{2}, \frac{Y^{-}(x,0,0)}{2}\right)$. Following item (1) of Proposition \ref{prop-vv1}, we have $Y^{-}(x, 0,0)<0$ when $x>0$ and $Y^{-}(x, 0, 0) > 0$ when $x < 0$. Thus, in both the $VV_{1}$ and $VI_{3}$ cases, we deal with saddle-type singularities for $x\neq 0$, whose nonzero eigenvalues have the same magnitude, as in \cite{HK23,HHPY26}. 

For the phase directional chart $\overline{y}=1$, setting $(y,\varepsilon) =(r_2^2, r_2\varepsilon_2)$, after multiplication by $r_{2}^{2} > 0$ the extended system changes into
\begin{equation}\label{(3.6)}
		\begin{array}{rcl}
			\dot{x} & = & r^2_2\left(
			X^{+}(x,r_2^2, r_2\varepsilon_2\widetilde{\lambda})\phi_+(\varepsilon_2^2) +X^-(x,r_2^2,r_2\varepsilon_2\widetilde{\lambda})(1 -\phi_+(\varepsilon_2^2))\right),\\
			\dot{r_2} & = & \frac{r_2}{2}\left(Y^+(x,r_2^2,r_2\varepsilon_2\widetilde{\lambda})\phi_+(\varepsilon_2^2)+ Y^{-}(x,r_2^2, r_2\varepsilon_2\widetilde{\lambda})(1-\phi_+(\varepsilon_2^2))\right),\\
			\dot{\varepsilon_2} & = & -\frac{\varepsilon_2}{2}\left(Y^+(x,r_2^2, r_2\varepsilon_2\widetilde{\lambda})\phi_+(\varepsilon_2^2)+ Y^{-}(x,r_2^2, r_2\varepsilon_2\widetilde{\lambda})(1-\phi_+(\varepsilon_2^2))\right),
		\end{array}
\end{equation}
with $\phi_+$ defined in Assumption \ref{assump-reg3}.

For $x\neq 0$, the points on the edge of the cylinder of the form $(x,0,0)$ are again saddle-type singularities of \eqref{(3.6)}, with eigenvalues $\left(0,\frac{Y^+(x,0,0)}{2}, -\frac{Y^+(x,0,0)}{2}\right)$. Recall that $Y^{+}(x, 0,0)>0$ when $x>0$ and $Y^{+}(x, 0, 0) < 0$ when $x < 0$. 

\begin{figure}[h!]
    \includegraphics[scale=0.4]{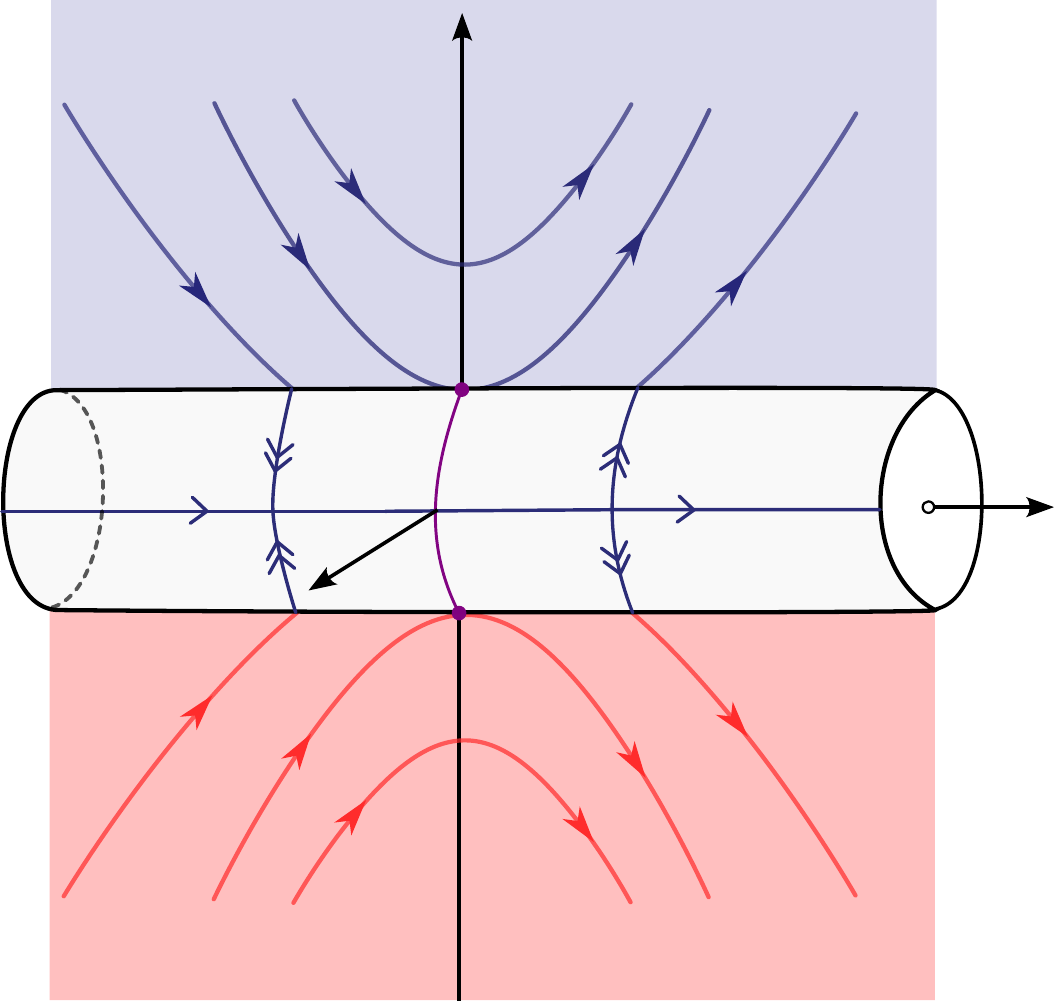}
    {\footnotesize \put(-5.1cm,2.9cm){$\epsilon$} \put(-4.05cm,3.8cm){$H$}
    \put(-5.06cm,3.36cm){$C$}
    \put(-6.7cm,6.3cm){\textcolor{blue}{$Z^+$}}
    \put(-6.7cm,0.2cm){\textcolor{red}{$Z^-$}
    }
    \put(-3.9cm,6.5cm){$y$}
    \put(-0.2cm,3.06cm){$x$}}
    \caption{Primary cylindrical blow-up in the case $VV_1$. 
    On the cylinder we find two critical manifolds $C$ and $H$.
    }
    \label{FirstBlowup}
\end{figure}

\subsection{Secondary cylindrical blow-up}

In this section we show that system \eqref{eq-psvf-reg-appendix} satisfies Assumptions T3–T6 of \cite{DD05} near the intersection $p_0$ of $H$ with $C$. We introduce the notation
\begin{equation}\label{eq-appendix-pwvfz}
Z^\pm(x,y,\varepsilon\widetilde{\lambda})=Z^\pm(x,y,0)+\varepsilon\widetilde{\lambda}\widetilde{Z}^\pm(x,y)+\mathcal{O}(\varepsilon^2),
\end{equation} 
where $Z^\pm=(X^\pm,Y^\pm)$, $\widetilde{Z}^{\pm}=(\widetilde{X}^{\pm},\widetilde{Y}^{\pm})$ and $\widetilde{\lambda}\sim 0$. Also, it is proven in \cite[Theorem 2.6]{BLM18} that the unfolding of the two-fold (which can be $VV_{1}$ or $VI_{3}$) is versal if \eqref{eq-appendix-pwvfz} satisfies the condition
	\begin{align}\label{2.7}
	    \widetilde{Y}^-\frac{\partial Y^+}{\partial x}\neq \widetilde{Y}^+\frac{\partial Y^-}{\partial x}
	\end{align}
at $(x,y,\varepsilon\widetilde{\lambda}) = (0,0,0)$.

We blow up the degenerate line $H$ of \eqref{eq-psvf-reg-appendix} to a cylinder by means of the secondary blow-up transformation
\begin{equation}\label{secondaryblowup}
(\widetilde{x},y_{2},\widetilde{\varepsilon},\rho) \longmapsto (\rho\widetilde{x},y_{2}, \rho\widetilde{\varepsilon})
= (x,y_{2},\varepsilon),
\end{equation}
where $(\widetilde{x},\widetilde{\varepsilon})\in \mathbb{S}^1$ and $\rho,\widetilde{\varepsilon}\geq 0$. As in the primary cylindrical blow-up, we work in different charts. We first consider the endpoints of the normally attracting and normally repelling parts of $C$ on the edge of the secondary cylinder.

\subsubsection{Dynamics in the phase directional charts $\widetilde{x}=\pm1$.}

In these directional charts, we verify Assumptions T3 and T4 of \cite{DD05}. We first consider the phase-directional chart $\widetilde{x}=1$, where $(x,\varepsilon)=(\rho,\rho\widetilde{\varepsilon})$. In these new coordinates, after division by $\rho>0$, the system becomes 
\begin{equation}\label{eq-2blowup-x=1}
\begin{array}{rcl}
    \dot{y}_2 & = & \frac{\partial Y^+}{\partial x}(0,0,0)\phi(y_2)+\frac{\partial Y^-}{\partial x}(0,0,0)(1-\phi(y_2))+\mathcal{O}(\rho,\widetilde{\varepsilon}), \\
    \dot{\rho} & = & \rho\widetilde{\varepsilon}^2\left(X^+(0,0,0)\phi(y_2)+X^-(0,0,0)(1-\phi(y_2)) + \mathcal{O}(\rho)\right), \\
    \dot{\tilde{\varepsilon}} & = & -\tilde{\varepsilon}^3\left(X^+(0,0,0)\phi(y_2) + X^-(0,0,0)(1-\phi(y_2)) + \mathcal{O}(\rho)\right).
\end{array}
\end{equation}

When $\rho=\widetilde{\varepsilon}=0$, system \eqref{eq-2blowup-x=1} has a singularity whose $y_{2}$-coordinate is given by
\begin{equation*}
    y_2 = y_{2c} := \phi^{-1}\left(\frac{-\frac{\partial Y^-}{\partial x}}{\frac{\partial Y^+}{\partial x} - \frac{\partial Y^-}{\partial x}}(0,0,0)\right).
\end{equation*}

The eigenvalues of the linearization of \eqref{eq-2blowup-x=1} computed at $(y_{2c},0,0)$ are given, respectively, by
\begin{equation*}
    \left(\frac{\partial Y^+}{\partial x} - \frac{\partial Y^-}{\partial x}\right)(0,0,0)\phi'(y_{2c}), \quad 0 \quad \text{and} \quad 0,
\end{equation*}
with the first eigenvalue being positive due to Proposition \ref{prop-vv1} and Assumption \ref{assump-reg2}. Therefore, this singularity admits two-dimensional center manifolds that are transverse to the unstable manifold, the latter being given by the $y_2$-axis. The endpoint of the repelling part of $C$ is normally hyperbolic, and hence Assumption T3 holds.

Moreover, each center manifold of system~\eqref{eq-2blowup-x=1} at the singularity $y_{2c}$ can be written as the graph of
\begin{align*}
    y_2=y_{2c}+\mathcal{O}(\rho,\widetilde{\varepsilon}).
\end{align*}

From the $(\rho,\widetilde{\varepsilon})$-component of \eqref{eq-2blowup-x=1}, we obtain the following center behavior:
\begin{equation*}
\dot{\rho}=\rho \widetilde{\varepsilon}^2\left(X^{sl}(0,0)+\mathcal{O}(\rho,\widetilde{\varepsilon})\right), \qquad \dot{\widetilde{\varepsilon}} = -\widetilde{\varepsilon}^3\left(X^{sl}(0,0)+\mathcal{O}(\rho,\widetilde{\varepsilon})\right).
\end{equation*}

Since $X^{sl}(0,0)>0$ in both the $VV_{1}$ and $VI_{3}$ cases (see Proposition~\ref{prop-vv1}), the system has, after division by $\widetilde{\varepsilon}^2$, an isolated hyperbolic saddle at $(\rho,\widetilde{\varepsilon})=(0,0)$. Hence Assumption T4 holds. Notice that the exponent in $\widetilde{\varepsilon}^2$ coincides with the order of degeneracy defined above.

The chart $\widetilde{x}=-1$ can be treated in a similar way by applying $(t,\rho,\widetilde{\varepsilon})\mapsto(-t,-\rho,-\widetilde{\varepsilon})$ to system \eqref{eq-2blowup-x=1}.

\subsubsection{Dynamics in the family chart $\widetilde{\varepsilon}=1$}

In this chart, we verify Assumptions T5 and T6 of \cite{DD05}. The reader may observe that, up to this point in the proof, the computations and conclusions are identical for the cases $VV_{1}$ and $VI_{3}$. However, in the family chart $\widetilde{\varepsilon}=1$, some differences arise, as we shall now see.

In this scaling chart, we have $x=\varepsilon x_2$. After division by $\varepsilon>0$ and passage to the limit $\varepsilon\to0$, we obtain the limiting system
\begin{align}\label{A.3}
    \begin{array}{cl}
       \dot{x}_2 & = X^+(0,0,0)\phi(y_2)
    +X^-(0,0,0)(1-\phi(y_2)),\\
    \dot{y}_2 & =\left(x_2 \frac{\partial Y^+}{\partial x}(0,0, 0)+\widetilde{\lambda}\widetilde{Y}^+(0,0)\right)\phi(y_2) + \left(x_2 \frac{\partial Y^-}{\partial x}(0,0,0)+\widetilde{\lambda}\widetilde{Y}^-(0,0)\right)(1-\phi(y_2)).
    \end{array}    
\end{align}

Observe that in the case $VV_{1}$ the first line of \eqref{A.3} is always positive. Indeed, $X^{\pm}(0,0,0)>0$ by \eqref{eq-prop-vv1}, and any convex combination of these two quantities is therefore positive. Hence, the system has no singularities at the top of the secondary cylinder. In the case $VI_{3}$, it is not difficult to see from \eqref{eq-prop-vi3} and \eqref{VI3-extra-condition} that, for $\widetilde{\lambda}=0$, system \eqref{A.3} has a center at
$$(x_2,y_2)=\left(0,\phi^{-1}\left( \frac{-X^{-}}{X^{+}-X^{-}}
(0,0,0)\right)\right).$$

A crucial observation is that, in both cases, when $\widetilde{\lambda}=0$, system \eqref{A.3} has an invariant line $\gamma$ defined by $\gamma = \{y_2 = y_{2c}\}$. Moreover, the dynamics of \eqref{A.3} restricted to $\gamma$ is given by
\begin{equation*}
\dot{x}_2 = X^{sl}(0,0)>0, \qquad \dot{y}_2 = 0.    
\end{equation*}

The line $\gamma$ is a
heteroclinic connection on the cylinder connecting the end point of the attracting part of $C$ to the end point of the repelling part of $C$. Thus, Assumption T5 holds (see Fig. \ref{secondBlowup}). In the case $VI_{3}$, the line $\gamma$ lies above the center, by \eqref{eq-prop-vi3} and \eqref{VI3-extra-condition}.

\begin{figure}[h!]
    \includegraphics[scale=0.4]{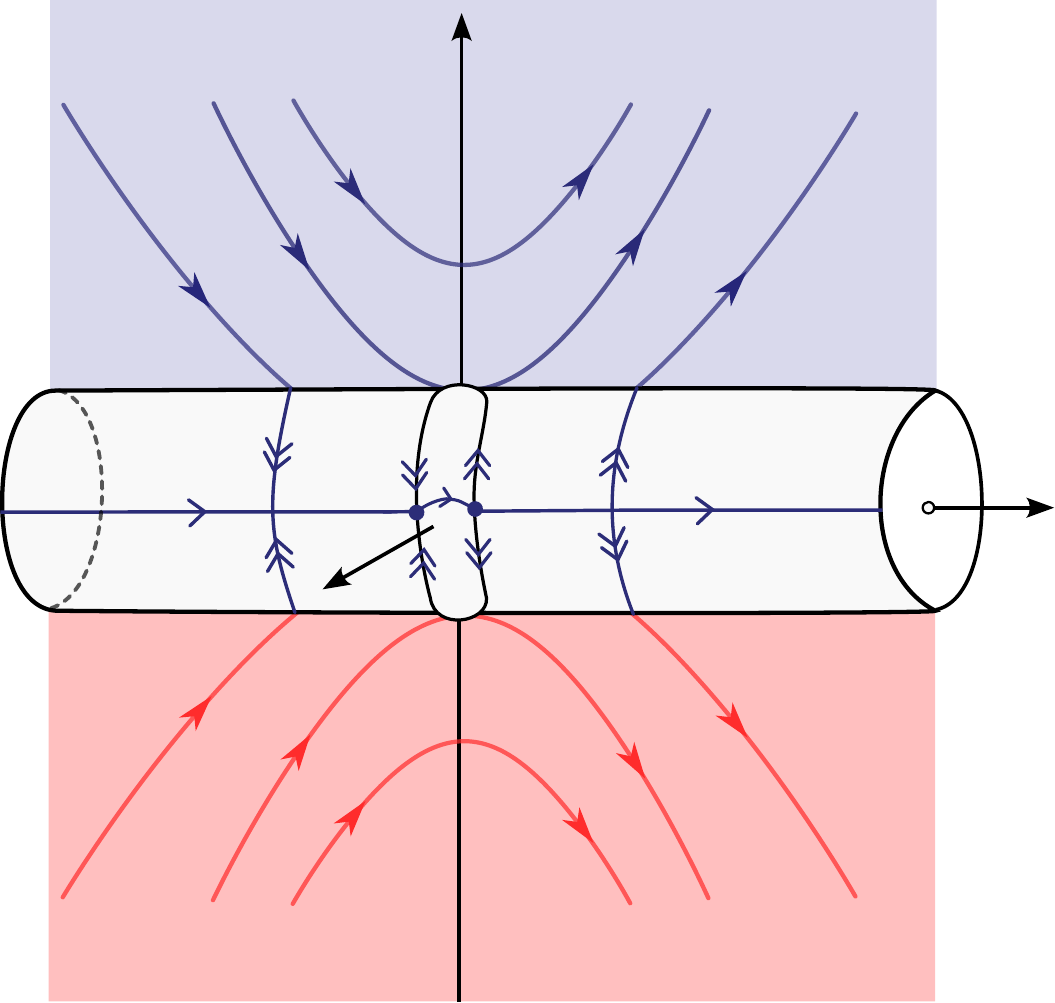}
    {\footnotesize \put(-5.1cm,2.9cm){$\epsilon$} 
    \put(-5.06cm,3.36cm){$C$}
    \put(-6.7cm,6.3cm){\textcolor{blue}{$Z^+$}}
    \put(-6.7cm,0.2cm){\textcolor{red}{$Z^-$}
    }
    \put(-3.9cm,6.5cm){$y$}
    \put(-0.2cm,3.06cm){$x$}}
    \caption{Secondary cylindrical blow-up in the case $VV_1$. We blow up $H$ through the cylindrical blow-up transformation \eqref{secondaryblowup}. 
    }
    \label{secondBlowup}
\end{figure}


Finally, it remains to verify Assumption T6, namely, that the invariant line $\gamma$ breaks regularly as the parameter $\widetilde{\lambda}$ varies near $0$. This was proved in \cite{HK23} for the case $VI_3$, using \eqref{eq-regularity} or, equivalently, \eqref{2.7}. 
The proof in the case $VV_1$ is analogous, and we therefore omit the details.

Under the assumptions of Theorem~\ref{mthm-singularity}, conditions T3--T5 hold in both cases. Therefore, the results of \cite{DD08} apply (see also \cite{HHPY26}).


\begin{thebibliography}{99}


\bibitem{BLM18}
     \newblock C. Bonet-Reves, J. Larrosa and T. M-Seara,
     \newblock Regularization around a generic codimension one fold-fold singularity,
     \newblock \emph{J. Differ. Equ.} \textbf{265} (2018), 1761--1838.

\bibitem{BPT2013}
    \newblock C.A. Buzzi, C. Pessoa, J. Torregrosa,
    \newblock Piecewise linear perturbations of a linear center,
    \newblock \emph{Discrete and Continuous Dynamical Systems} \textbf{33}(9) (2013), 3915--3936.

\bibitem{BST06}
    \newblock C.A. Buzzi, P.R. Silva, M.A. Teixeira.
    \newblock A singular approach to discontinuous vector fields on the plane,
    \newblock \emph{J. Differ. Equ.} \textbf{231} (2006), 633--655.


\bibitem{CFS2021}
    \newblock V. Carmona, F. Fernández-Sánchez, 
    \newblock Integral characterization for Poincar\'e half-maps in planar linear systems,
    \newblock \emph{J. Differ. Equ.} \textbf{305}, (2021) 319–346.


\bibitem{CFSN2021}
    \newblock V. Carmona, F. Fern\'andez-S\'anchez, and D. D. Novaes, 
    \newblock A new simple proof for Lum–Chua’s conjecture,
    \newblock \emph{Nonlinear Analysis: Hybrid Systems} \textbf{40} (2021), 100992.

\bibitem{CFSN2023} 
    \newblock V. Carmona, F. Fernández-Sánchez, D.D. Novaes,
    \newblock A succinct characterization of period annuli in planar piecewise linear differential systems with a straight line of nonsmoothness,
    \newblock \emph{J. Nonlinear Sci.} \textbf{33} (2023), 88.

\bibitem{CFSN2023ii} 
    \newblock V. Carmona, F. Fernández-Sánchez, D.D. Novaes,
    \newblock Uniqueness and stability of limit cycles in planar piecewise linear differential systems without sliding region.
    \newblock \emph{Commun. Nonlinear Sci. Numer. Simul.} \textbf{123} (2023), 107257.

\bibitem{CFSN2023iii} 
    \newblock V. Carmona, F. Fernández-Sánchez, D.D. Novaes,
    \newblock Uniform upper bound for the number of limit cycles of planar piecewise linear differential systems with two zones separated by a straight line,
    \newblock \emph{Applied Mathematics Letters} \textbf{137}, (2023), 108501.

\bibitem{Cau2012} 
    \newblock M. Caubergh,
    \newblock Hilbert’s sixteenth problem for polynomial Liénard equations,
    \newblock \emph{Qual. Theory Dyn. Syst.} \textbf{11}(1) (2012), 3--18.

  



\bibitem{DD05}
     \newblock P. De Maesschalck and F. Dumortier,
     \newblock Time analysis and entry-exit relation near planar turning points,
     \newblock \emph{J. Differ. Equ.} \textbf{215} (2005), 225--267.
     
\bibitem{DD08}
     \newblock P. De Maesschalck and F. Dumortier,
     \newblock Canard cycles in the presence of slow dynamics with singularities,
     \newblock \emph{Proc. Roy. Soc. Edinburgh Sect. A} \textbf{138} (2008), 265--299.

\bibitem{DMDR} P. De Maesschalck, F. Dumortier, R. Roussarie. \emph{Canard cycles: from birth to transition}, volume 73 of Ergebnisse der Mathematik und ihrer Grenzgebiete. 3. Folge. A Series of Modern Surveys in Mathematics [Results in Mathematics and Related Areas. 3rd Series. A Series of Modern Surveys in Mathematics]. Springer, Cham, (2021).

\bibitem{DMH2015}
     \newblock P. De Maesschalck and R. Huzak,
     \newblock Slow divergence integrals in classical Liénard equations near centers,
     \newblock \emph{J. Dyn. Differ. Equ.} \textbf{27}(1) (2015), 177--185.

\bibitem{DMHP}
     \newblock P. De Maesschalck, R. Huzak and O.H. Perez,
     \newblock Canard cycles of non-linearly regularized piecewise smooth vector fields,
     \newblock \emph{J. Differ. Equ.} \textbf{460} (2026), 114079.

\bibitem{DPR2007}
    \newblock F. Dumortier, D. Panazzolo and R. Roussarie,
    \newblock More limit cycles than expected in Liénard equations, 
    \newblock \textit{Proc. Am. Math. Soc.}, \textbf{135}(6) (2007), 1895–1904.

\bibitem{DRR94}
    \newblock F. Dumortier, R. Roussarie, and C. Rousseau,
    \newblock Hilbert’s 16th problem for quadratic vector fields, 
    \newblock \textit{J. Differ. Equ.}, \textbf{110}(1) (1994), 86--133.

\bibitem{Filippov}
    \newblock A.F. Filippov,
    \newblock Differential Equations with Discontinuous Right-Hand Sides, Mathematics and Its Applications (Soviet Series),
    \newblock \emph{Kluwer Academic Publishers}, Dordrecht, 1988.

\bibitem{FPRT1998}
\newblock E. Freire, E. Ponce, F. Rodrigo, and F. Torres,
    \newblock Bifurcation sets of continuous piecewise linear systems with two zones,
    \newblock \emph{Int. J. Bifurcation Chaos
Appl. Sci. Eng}, \textbf{8}(11) (1998), 2073--2097.


\bibitem{GRS2024}
\newblock A. Gasull, G. Rond\'on and P.R. da Silva,
    \newblock On the number of limit cycles for piecewise polynomial holomorphic systems,
    \newblock \emph{SIAM Journal on Applied Dynamical Systems}, \textbf{23}(3) (2024).

    \bibitem{GST2011}
    \newblock M. Guardia, T.M. Seara and M.A. Teixeira,
    \newblock Generic bifurcations of low codimension of planar Filippov systems,
    \newblock \emph{J. Differ. Equ.}, \textbf{250}(4) (2011), 1967--2023.

\bibitem{HY2012}
    \newblock S. M. Huan and X.S. Yang,
    \newblock On the number of limit cycles in general planar piecewise linear systems,
    \newblock \emph{Discrete Contin. Dyn. Syst.}, \textbf{32}(6) (2012), 2147--2164.


    \bibitem{Hil1900}
\newblock D. Hilbert,
\newblock Mathematical Problems,
\newblock \textit{Bull. Amer. Math. Soc.}  \textbf{8}(10) (1902), 437--479.

\bibitem{HHPY26}
\newblock J. Huang, R. Huzak, O.H. Perez and J. Yao,
     \newblock Cyclicity of sliding cycles with singularities of regularized piecewise smooth visible-invisible two-folds,
     \newblock \emph{J. Differ. Equ.}, \textbf{465} (2026), 114205.

 \bibitem{HDM-generalized}
     \newblock R. Huzak and P. De Maesschalck,
     \newblock Slow divergence integrals in generalized Li\'enard
equations near centers,
     \newblock \emph{Electron. J. Qual. Theory Differ. Equ.}, \textbf{66} (2014), 1--10.    

\bibitem{HK23}
     \newblock R. Huzak and K. Uldall Kristiansen,
     \newblock The number of limit cycles for regularized piecewise polynomial systems is unbounded,
     \newblock \emph{J. Differ. Equ.}, \textbf{342} (2023), 34--62.

\bibitem{HK24}
     \newblock R. Huzak and K. Uldall Kristiansen,
     \newblock Sliding cycles of regularized piecewise linear visible-invisible twofolds,
     \newblock \emph{Qual. Theory Dyn. Syst.}, \textbf{23}(256) (2024).

\bibitem{HP}
     \newblock R. Huzak and O.H. Perez,
     \newblock An unbounded number of canard limit cycles in linear regularizations of piecewise linear systems,
     \newblock \emph{J. Nonlinear Science} \textbf{36}(61) (2026).


\bibitem{KRG03}
     \newblock Yu. A. Kuznetsov, S. Rinaldi and A. Gragnani,
     \newblock One parameter bifurcations in planar Filippov systems,
     \newblock \emph{Int. J. Bifur. Chaos} \textbf{13}(8) (2003), 2157--2188.


\bibitem{LliPon2013} \newblock J. Llibre and E. Ponce,
\newblock Three nested limit cycles in discontinuous piecewise
linear differential systems with two zones,
\newblock \emph{Dyn. Contin. Discrete Impuls.
Syst., Ser. B, Appl. Algorithms} \textbf{19}(3) (2012), 325--335.


\bibitem{LumChu1991} \newblock R. Lum and L.O. Chua,
\newblock Global properties of continuous piecewise linear vector fields. Part I: Simplest case in $\mathbb{R}^{2}$,
\newblock \emph{International Journal of Circuit Theory and Applications} \textbf{19}(3) (1991), 251--307.

\bibitem{PanSil2017} \newblock D. Panazzolo and  P.R. da Silva,
\newblock Regularization of discontinuous foliations: Blowing up and sliding conditions via Fenichel Theory,
\newblock \emph{J. Differ. Equ.} \textbf{263}(12) (2017), 8362--8390.

\bibitem{PRS2023} \newblock O.H. Perez, G. Rond\'on and  P.R. da Silva,
\newblock Slow-fast normal forms arising from piecewise smooth vector fields,
\newblock \emph{J. Dyn. Control Syst.} \textbf{29}(4) (2023), 1709--1726.

\bibitem{Smale2000}
    \newblock S. Smale,
    \newblock Mathematical problems for the next century.
    \newblock In Mathematics:
frontiers and perspectives, pages 271--294. Amer. Math. Soc., Providence, RI, 2000.

\bibitem{SM2002} \newblock J. Sotomayor and A.L.F Machado,
\newblock Structurally stable discontinuous vector fields in the plane,
\newblock \emph{Qual. Theory Dyn. Syst.} \textbf{3} (2002), 227--250.

\bibitem{ST96}
     \newblock J. Sotomayor and M.A. Teixeira,
     \newblock Regularization of discontinuous vector fields,
     \newblock \emph{In Proceedings of the International Conference on Differential Equations}, Lisboa, (1996), 207--223.

\end{thebibliography}
\end{document}